\newtheorem{theorem}{Theorem}
\newtheorem{lemma}{Lemma}
\newtheorem{definition}{Definition}
\newtheorem{corollary}{Corollary}
\title{Improving upper and lower bounds of the number of games born by day 4}
\author{Koki Suetsugu \\ National Institute of Informatics}
\date{}
\begin{document}
\maketitle

\begin{abstract} In combinatorial game theory, the lower and upper bounds of the number of games born by day $4$ have been recognized as $3.0 \cdot 10^{12}$ and $10^{434}$, respectively. In this study, we improve the lower bound to $10^{28.2}$ and the upper bound to $4.0 \cdot 10^{184}$, respectively.
\end{abstract}

\section{Introduction}

The main result of this study is improving the upper and lower bounds on the total number of canonical forms born by day $4$ under normal play convention.
Games with game tree heights less than or equal to $n$ are called games born by day $n$, and the set of all canonical forms born by day $n$ is denoted by $\mathbb{G}_n$. So far, the total number of canonical forms born by day $0, 1, 2, 3$ have been recognized as $1, 4, 22, 1474$, respectively. Meanwhile, the total number of canonical forms born by day $4$ is vast, and only wide upper and lower bounds $10^{434}$ and $3\cdot 10^{12}$ are known. Improving these upper and lower bounds was discussed as an open problem in \cite{CGT} and \cite{Now19} and has created much attention.
In this study, we used algebraic properties and programming to improve the upper and lower bounds and obtained a new upper bound, $4.0 \cdot 10^{184}$ and a new lower bound, $10^{28.2}$.

\subsection{Early results}
\label{sec2}

Early results of counting the number of games are summarized as a note in \cite{CGT}, Chapter 3, Section 1. According to the note, the total number of canonical forms born by day $3$ and the total numbers of dicotic games born by day $4$, reduced games born by day $4$, and hereditarily transitive games born by day $4$ are known, whereas the total number of canonical forms born by day $4$ is unknown.
As noted in the note, upper and lower bounds of general $|\mathbb{G}_n|$ were obtained by \cite{FW04}. 
The upper bound is given in the following three forms, with the lower forms having more complicated inequalities but stricter upper bounds.
$$
|\mathbb{G}_{n+1}| \leq 2^{|\mathbb{G}_n| + 1}  + |\mathbb{G}_n|.
$$
$$
|\mathbb{G}_{n+1}| \leq |\mathbb{G}_n| + 2^{|\mathbb{G}_n|} + 2.
$$
$$
|\mathbb{G}_{n+1}| \leq |\mathbb{G}_n| + [|\mathbb{G}_{n-1}|^2 + \frac{5}{2}|\mathbb{G}_{n-1}| + 2]2^{|\mathbb{G}_n|-2|\mathbb{G}_{n-1}|}.
$$

Substituting $|\mathbb{G}_3| = 1474, |\mathbb{G}_2| = 22$ into the third equation and examining the values specifically, we obtain

\begin{eqnarray}
|\mathbb{G}_4| &\leq& 1474 + [22^2 + \frac{5 \cdot 22}{2} + 2]2^{1474 - 2\cdot 22}\nonumber \\
&=& 1474 + [484 + 55 + 2] 2^{1430} \nonumber \\
&<& 10^{3} \cdot 10^{0.3011 \cdot 1430} \nonumber \\ 
&<& 10^{3} \cdot 10^{430.573} \nonumber \\
&=& 10^{433.573} \nonumber \\
&<&10^{434}. \nonumber
\end{eqnarray}

The lower bound is given in the following two forms, with the lower form having complicated inequality but a stricter lower bound.

$$
|\mathbb{G}_{n+1}|\geq 2^{\frac{|\mathbb{G}_n|}{2|\mathbb{G}_{n-1}|}}.
$$
$$
|\mathbb{G}_{n+1}|\geq 	(8|\mathbb{G}_{n-1}|-4)( 2^{\frac{|\mathbb{G}_n| - 2}{2|\mathbb{G}_{n-1}|-1}} -1).
$$

Substituting $|\mathbb{G}_3| = 1474, |\mathbb{G}_2| = 22$ into the second equation, we obtain,

\begin{eqnarray}
|\mathbb{G}_4|&\geq& (8 \cdot 22 - 4) \cdot (2 ^ \frac{1474 - 2}{2 \cdot 22 - 1} -1) \nonumber \\
&=& 172 \cdot (2^{\frac{1472}{43}} - 1) \nonumber \\
&>& 171 \cdot 10^{\frac{1472\cdot 0.301}{43}} \nonumber \\
&=& 171 \cdot 10^{10.304} \nonumber \\
&>& 171 \cdot 2 \cdot 10^{10} \nonumber \\
&>& 3.0 \cdot 10^{12} \nonumber.
\end{eqnarray}

Thus, we have
$$
3.0 \cdot 10^{12} < |\mathbb{G}_4| < 10^{434}, 
$$
and the width of upper and lower bounds is vast.

The rest of the paper is organized as follows. Section \ref{sec3} studies properties of $\mathbb{G}_3$ for improving upper and lower bounds of $|\mathbb{G}_4|$.
Using these properties, Section \ref{sec4} presents improved upper and lower bounds.
To guess which of the upper and lower bounds obtained is closer to the true value, in Section \ref{sec5}, we calculate the upper and lower bounds of $\mathbb{G}_3$ using the same method and compare them with the true value.
Section \ref{sec6} provides a conclusion.
In the Appendix, all the elements of $\mathbb{G}_3$ used in this improvement are summarized in tables.

\section{Division of $\mathbb{G}_3$}
\label{sec3}
\subsection{Stratification of $\mathbb{G}_3$}
In this section, to use improving for upper and lower bounds of $|\mathbb{G}_4|$, we divide $\mathbb{G}_3$ by two methods which have good properties.
 First, by using CGSuite(Version 0.7), we found all the elements of $\mathbb{G}_3$ and divided them as Algorithm \ref{algo1}.
\begin{algorithm}[tb]
\caption{Algorithm determining stratification of $\mathbb{G}_3$}
\label{algo1}
\begin{algorithmic}
\STATE $S \leftarrow \mathbb{G}_3$
\STATE $i \leftarrow 0$
\WHILE{$|S| > 0$}
\STATE $i \leftarrow i + 1$
\STATE $U_i \leftarrow \emptyset $
\FORALL{$s \in S$}
 \IF {$\forall t \in S \  (s \not < t)$}
\STATE $U_i = U_i \cup \{s\}$
\ENDIF
\ENDFOR
\STATE $S \leftarrow S \setminus U_i$
\ENDWHILE
\end{algorithmic}
\end{algorithm}


Intuitively, the elements of $\mathbb{G}_3$ are divided into groups, in order from largest to smallest.
Each pair of elements in the same group is incomparable and each element except for the element in the first group has at least one larger element in the upper group of the group it belongs to. 
From this calculation, we obtained the sets $U_1, U_2, \cdots, U_{45}$. We call them a {\em stratification} of $\mathbb{G}_3$.
The number of elements in each set is shown in the Table \ref{tableU}.
For reference, the results for $\mathbb{G}_2$ of a similar process are shown in Fig \ref{u_2}. For $\mathbb{G}_3$, the number of elements is enormous, so a schematic is shown in Figure \ref{u_3}.
 Elements connected by lines have an order, with the upper element being larger than the lower element. Elements not directly or indirectly connected to each other are not comparable.

\begin{table}[tb]
\caption{Number of elements in $U_i$}
\label{tableU} 
\begin{tabular}{r|cccccccccccccccccccccc}
\hline
\hline
$i$ & 1 & 2 & 3 & 4 & 5 & 6 & 7 & 8 & 9 & 10 & 11 & 12 & 13 & 14 & 15 & 16 & 17 & 18 & 19 & 20 & 21 & 22\\ \hline
$|U_i|$ & 1 & 2 & 3 & 5 & 8 & 9 & 12 & 14 & 17 & 20 & 24 & 26 & 30 & 34 & 39 & 45 & 52 & 58 & 65 & 72  & 77 & 81\\
\end{tabular}


\begin{tabular}{ccccccccccccccccccccccc}
\hline
\hline
23 & 24 & 25 & 26 & 27 & 28 & 29 & 30 & 31 & 32 & 33 & 34 & 35 & 36 & 37 & 38 & 39 & 40 & 41 & 42 & 43 & 44 & 45 \\ \hline
86 & 81 & 77 & 72 & 65 & 58 & 52 & 45 & 39 & 34  & 30 & 26 &  24 & 20 & 17 & 14 & 12 & 9 & 8 & 5 & 3 & 2  & 1\\
\end{tabular}


\end{table}

\begin{figure}[tb]
\begin{tikzpicture}[auto]
\node (a) at (1.5, 0) {$-2$}; 
\node (b) at (0, 1.5) {$-1$};
\draw[-] (a) to (b);
\node (c) at (3, 1.5) {$-1*$};
\draw[-] (a) to (c);
\node (d) at (1.5, 3) {$-\frac{1}{2}$};
\node (e) at (3, 3) {$\{*\mid -1\}$};
\node (f) at (4.5, 3) {$\{0 \mid -1\}$};
\draw[-] (b) to (d);
\draw[-] (c) to (d);
\draw[-] (c) to (e);
\draw[-] (c) to (f);
\node (g) at (1.5, 4.5) {$\downarrow$};
\node (h) at (3, 4.5) {$\downarrow *$};
\node (i) at (4.5, 4.5) {$\{0, * \mid -1\}$};
\draw[-] (d) to (g);
\draw[-] (d) to (h);
\draw[-] (e) to (g);
\draw[-] (e) to (i);
\draw[-] (f) to (h);
\draw[-] (f) to (i);
\node (j) at (0, 6) {$0$};
\node (k) at (1.5, 6) {$*$};
\node (l) at (3,6) {$*2$};
\node (m) at (4.5,6) {$\pm 1$};
\draw[-] (g) to (j);
\draw[-] (g) to (l);
\draw[-] (h) to (k);
\draw[-] (h) to (l);
\draw[-] (i) to (l);
\draw[-] (i) to (m);
\node (n) at (1.5, 7.5) {$\uparrow$};
\node (o) at (3, 7.5) {$\uparrow*$};
\node (p) at (4.5, 7.5) {$\{1 \mid 0,*\}$};
\draw[-] (j) to (n);
\draw[-] (k) to (o);
\draw[-] (l) to (n);
\draw[-] (l) to (o);
\draw[-] (l) to (p);
\draw[-] (m) to (p);
\node (q) at(1.5, 9) {$\frac{1}{2}$};
\node (r) at(3,9) {$\{1 \mid * \}$};
\node (s) at (4.5,9) {$\{1 \mid 0\}$};
\draw[-] (n) to (q);
\draw[-] (n) to (r);
\draw[-] (o) to (q);
\draw[-] (o) to (s);
\draw[-] (p) to (r);
\draw[-] (p) to (s);
\node (t) at(0, 10.5) {$1$};
\node(u) at (3,10.5){$1*$};
\draw[-] (q) to (t);
\draw[-] (q) to (u);
\draw[-] (r) to (u);
\draw[-] (s) to (u);
\node (v) at(1.5,12) {$2$};
\draw[-] (t) to (v);
\draw[-] (u) to (v);
\node (u9) at (7, 0 ) {$U_9 = \{-2\}$};
\node (u8) at (7,1.5){$U_8 = \{-1, -1*\}$};
\node (u7) at (7, 3.0){$U_7 = \{ -\frac {1}{2}, \{*\-1\}, \{0|-1\}\}$};
\node (u6) at (7, 4.5){$U_6 = \{ \downarrow, \downarrow *, \{0,*|-1\}\}$};
\node (u5) at (7, 6) {$U_5 = \{ 0, *, *2, \pm1\}$};
\node (u4) at (7, 7.5) {$U_4 = \{ \uparrow, \uparrow*, \{1|0,*\}\}$};
\node (u3) at (7, 9) {$U_3 = \{\frac{1}{2}, \{1|*\}, \{1|0\}\}$};
\node (u2) at (7, 10.5) {$U_2 = \{1,1*\}$};
\node (u1) at (7,12) {$U_1=\{2\}$}; 
\end{tikzpicture}
\caption{$22$ games born by day $2$ and their stratification}
\label{u_2}
\end{figure}

\begin{figure*}[tb]
\begin{center}
\begin{tikzpicture}[auto]
\node (a) at (1.5, 0) {$-3$}; 
\node (b) at (0, 1.5) {$-2$};
\draw[-] (a) to (b);
\node (c) at (3, 1.5) {$-2*$};
\draw[-] (a) to (c);

\node (d) at (1.5, 3) {$-\frac{3}{2}$};
\node (e) at (3, 3) {$\{-1|-2\}$};
\node (f) at (4.5, 3) {$\{-1* | -2\}$};
\draw[-] (b) to (d);
\draw[-] (c) to (d);
\draw[-] (c) to (e);
\draw[-] (c) to (f);

\node (j) at (0, 4.5) {$\vdots$};
\node (k) at (1.5,4.5) {$\vdots$};
\node (l) at (3,4.5) {$\vdots$};
\node (m) at (4.5,4.5) {$\vdots$};
\node (n) at (6,4.5) {$\vdots$};

\draw[-] (d) to (j);
\draw[-] (d) to (k);
\draw[-] (e) to (k);
\draw[-] (e) to (l);
\draw[-] (f) to  (j);
\draw[-] (f) to (l);
\draw[-] (f) to (m);
\draw[-] (f) to (n);

\node (g) at (1.5, 6) {$\frac{3}{2}$};
\node (h) at (3, 6){$\{2|1\}$};
\node (i) at (4.5,6) {$\{2|1*\}$};

\draw[-] (j) to (g);
\draw[-] (k) to (g);
\draw[-] (k) to (h);
\draw[-] (l) to (h);
\draw[-] (j) to (i);
\draw[-] (l) to (i);
\draw[-] (m) to (i);
\draw[-] (n) to (i);

\node (t) at(0, 7.5) {$2$};
\node(u) at (3,7.5){$2*$};
\draw[-] (g) to (t);
\draw[-] (g) to (u);
\draw[-] (h) to (u);
\draw[-] (i) to (u);
\node (v) at(1.5,9) {$3$};
\draw[-] (t) to (v);
\draw[-] (u) to (v);
\node(u1) at (9,9) {$U_1 = \{3\}$};
\node(u2) at (9,7.5) {$U_2 = \{2, 2*\}$};
\node(u3) at (9,6) {$U_3 =\{ \frac{3}{2}, \{2|1\}, \{2|1*\}\} $};
\node(udots) at (9,4.5) {$\vdots$};
\node(u43) at (9,3) {$U_{43} = \{ -\frac{3}{2}, \{-1|-2\}, \{-1*|-2\}\}$};
\node(u44) at (9,1.5) {$U_{44} = \{-2, -2*\}$};
\node(u45) at (9,0) {$U_{45} = \{-3\}$};

\end{tikzpicture}
\end{center}
\caption{$1474$ games born by day $3$ and their stratification}
\label{u_3}
\end{figure*}

\begin{figure}[tb]
\begin{tikzpicture}[auto]

\node (j) at (0, 4.5) {$1 \uparrow$};
\node (k) at (1.5,4.5) {$1 \uparrow *$};
\node (l) at (3,4.5) {$\{2 | 1, 1*\}$};
\node (m) at (4.5,4.5) {$\{2 | \{1 | *\}\}$};
\node (n) at (6,4.5) {$\{2 | \{ 1 | 0 \}\}$};

\node (g) at (1.5, 6) {$\frac{3}{2}$};
\node (h) at (3, 6){$\{2|1\}$};
\node (i) at (4.5,6) {$\{2|1*\}$};

\draw[-] (j) to (g);
\draw[-] (k) to (g);
\draw[-] (k) to (h);
\draw[-] (l) to (h);
\draw[-] (j) to (i);
\draw[-] (l) to (i);
\draw[-] (m) to (i);
\draw[-] (n) to (i);

\end{tikzpicture}
\caption{$\mathcal{G}_3$}
\label{figmatch}
\end{figure}

For sets A and B, let $A \oplus B$ be $A\cup B$ and $A \cap B= \emptyset$.
Then, $\mathbb{G}_3=U_1 \oplus U_2 \oplus \cdots \oplus U_{45}$.
Here, every $U_i$ is an anti-chain by Algorithm \ref{algo1}.
The set with the highest number of elements is $U_{23}$ with $86$ elements. In addition, when $1 \leq i \leq 22$, $|U_i| < |U_{i+1}|$, and when $23 \leq i \leq 45$, $|U_i| > |U_{i + 1}|$.
Further, the following lemma was also established.

\begin{lemma}
\label{jouge}
The stratification is upper and lower symmetric. That is, for any $u \in U_i$, $-u \in U_{46-i}$ holds.
\end{lemma}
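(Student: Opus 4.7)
The plan is to recast the statement via chain-length statistics in $\mathbb{G}_3$ and then exploit the order-reversing involution $u \mapsto -u$.

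For $u \in \mathbb{G}_3$, let $\ell(u)$ denote the maximum number of elements in a chain $u = u_1 < u_2 < \cdots < u_k$ in $\mathbb{G}_3$. A short induction on $i$, unpacking Algorithm \ref{algo1}, shows that $u \in U_i$ if and only if $\ell(u) = i$. Indeed, $U_1$ is the set of maximal elements, which equals $\{u : \ell(u)=1\}$; and for $i > 1$ the inductive hypothesis gives $\mathbb{G}_3 \setminus (U_1 \cup \cdots \cup U_{i-1}) = \{u : \ell(u) \ge i\}$, so $u$ is placed in $U_i$ iff $\ell(u) \ge i$ and every strict successor $t$ of $u$ in $\mathbb{G}_3$ satisfies $\ell(t) \le i-1$, and this forces $\ell(u) = i$.

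Dually, let $\ell^-(u)$ denote the maximum length of a chain in $\mathbb{G}_3$ terminating at $u$. Because $\mathbb{G}_3 = -\mathbb{G}_3$ and $G < H$ iff $-H < -G$, we have $\ell^-(u) = \ell(-u)$. Combining this with the characterization above, the claimed identity $-u \in U_{46-i}$ for $u \in U_i$ becomes the single statement
\[
\ell(u) + \ell^-(u) = 46 \qquad \text{for every } u \in \mathbb{G}_3,
\]
which says that every canonical form in $\mathbb{G}_3$ lies on a maximal chain of length $45$. One direction is free: concatenating a downward chain ending at $u$ with an upward chain starting at $u$ produces a chain of $\ell(u) + \ell^-(u) - 1$ elements, and the longest chain in $\mathbb{G}_3$ has at most $45$ elements since Algorithm \ref{algo1} terminates after $45$ rounds, hence $\ell(u) + \ell^-(u) \le 46$ automatically.

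The main obstacle is the reverse inequality: showing that every $u \in \mathbb{G}_3$ actually admits a downward chain of the full residual length $46 - \ell(u)$, or equivalently that running Algorithm \ref{algo1} from below (by selecting minimal elements at each round) yields precisely the partition $U_{45}, U_{44}, \ldots, U_1$. The practical route is computational: having already enumerated $\mathbb{G}_3$ with CGSuite and recorded the sets $U_i$ in the Appendix tables, one verifies directly that for each $u \in U_i$ a downward chain of length $46 - i$ exists, or equivalently that the bottom-up stratification reproduces the level sizes $|U_{46-i}|$ listed in Table \ref{tableU}. A purely structural argument would require showing that the canonical forms of $\mathbb{G}_3$ always supply enough intermediate games to thread a maximal chain through any prescribed point, which looks considerably harder than the direct check made possible by the explicit enumeration; I would therefore rely on the computational verification.
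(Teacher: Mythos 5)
Your reduction is sound and, as far as I can tell, correct: the characterization $u \in U_i \iff \ell(u)=i$ follows from Algorithm \ref{algo1} exactly as you argue, negation is an order-reversing involution of $\mathbb{G}_3$ onto itself, and the inequality $\ell(u)+\ell^-(u)\le 46$ is forced because a chain meets each antichain $U_i$ at most once. Your observation that matching the level \emph{sizes} of the bottom-up stratification against Table \ref{tableU} already forces pointwise equality (since $\ell^-(u)\le 46-\ell(u)$ holds termwise and the sums agree) is a nice touch that makes the verification cheap. The comparison with the paper is a little lopsided, though: the paper offers no proof of Lemma \ref{jouge} at all --- it is stated as an observation "established" from the CGSuite enumeration, i.e., the symmetry $-U_i = U_{46-i}$ is checked directly on the computed stratification. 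Your version isolates the actual poset-theoretic content, namely that $\mathbb{G}_3$ is graded in the strong sense that every element lies on a $45$-element maximal chain, and correctly recognizes that this core fact is not automatic (a poset with an order-reversing involution need not have coinciding top-down and bottom-up stratifications) and must be verified computationally. So both arguments bottom out in the same machine check; what your framing buys is a clean statement of exactly what is being checked, and it makes transparent why the naive "negation reverses order, hence the levels flip" argument is incomplete on its own. One could alternatively have tried to derive the downward-reachability needed for gradedness from Lemma \ref{match}, but as used in the paper that lemma's application already presupposes the symmetry, so your direct check is the right call.
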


Next, for each $i$, we constructed bipartite graph $\mathcal{G}_i =(V_1\oplus V_2,E)$ as follows.
\begin{enumerate}
\item Let $|V_1| = |U_i|, |V_2| = |U_{i+1}|$, $v_{1j}$ be the vertex corresponding to $s_j \in U_i$, and $v_{2k}$ be the vertex corresponding to $t_k \in U_{i+1}$.
\item For any $s_j \in U_i, t_k \in U_{i+1}$, if $s_j > t_k$ then $(v_{1j}, v_{2k}) \in E$, and otherwise, $(v_{1j}, v_{2k}) \not \in E$.
\end{enumerate}

As an example, Figure \ref{figmatch} shows $\mathcal{G}_3$.
Here, we examined the maximum matching of each $\mathcal{G}_i$ and obtained the following results.
\begin{lemma}
\label{match}
Let $M(\mathcal{G})$ be the number of elements of a maximum matching of graph $\mathcal{G}$. Then, $M(\mathcal{G}_i) = \min(|U_i|, |U_{i+1}|)$. 
\end{lemma}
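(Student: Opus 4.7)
The trivial direction $M(\mathcal{G}_i)\le\min(|U_i|,|U_{i+1}|)$ follows because a matching in any bipartite graph uses at most $\min(|V_1|,|V_2|)$ edges, so the content of the lemma is the existence of a matching saturating the smaller side. The plan is to combine a symmetry reduction via Lemma \ref{jouge} with a direct, finite verification of Hall's condition.

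First I would invoke Lemma \ref{jouge}: the involution $u\mapsto -u$ on $\mathbb{G}_3$ is an order-reversing bijection sending $U_i$ to $U_{46-i}$, so it induces an isomorphism of bipartite graphs $\mathcal{G}_i\cong\mathcal{G}_{45-i}$ in which the two sides are swapped, giving $M(\mathcal{G}_i)=M(\mathcal{G}_{45-i})$. By the table of $|U_i|$ it then suffices to treat $i\in\{1,2,\dots,22\}$, where $|U_i|\le |U_{i+1}|$ and the smaller side is $V_1=U_i$. For these indices the claim reduces to Hall's condition: for every $A\subseteq U_i$, $|N(A)|\ge |A|$, where $N(A)=\{t\in U_{i+1}:\exists\,s\in A,\ s>t\}$.

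The remaining verification is computational. Using the canonical forms of $\mathbb{G}_3$ already enumerated by CGSuite, I would, for each of the $22$ remaining indices, build the edge set of $\mathcal{G}_i$ directly from the partial order on $\mathbb{G}_3$ and run an augmenting-path or Hopcroft--Karp maximum-matching procedure, then check that the resulting matching has size $|U_i|$. As a minor structural sanity check that at least rules out Hall violations at singletons, every $t\in U_{i+1}$ has at least one neighbor in $U_i$: at iteration $i$ of Algorithm \ref{algo1} the game $t$ belonged to $S$ but was not maximal in $S$, so some $s'\in S$ exceeds $t$, and any maximal element of $S$ lying above $s'$ belongs to $U_i$.

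The main obstacle is conceptual rather than computational: there does not appear to be a clean structural reason for Hall's condition to hold on arbitrary subsets, so the argument is inherently finitary and leans on the explicit description of the $1474$ canonical forms of $\mathbb{G}_3$ listed in the Appendix. The scale is modest -- $|U_{23}|=86$ is the widest anti-chain involved -- so the matching computations themselves are routine.
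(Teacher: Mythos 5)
Your proposal is correct and matches the paper's treatment: the paper establishes this lemma purely by direct computation of the maximum matchings of the explicitly constructed graphs $\mathcal{G}_i$ (it offers no structural proof), which is exactly the finite verification your plan reduces to. The symmetry reduction via Lemma \ref{jouge} and the observation that every vertex of $U_{i+1}$ has a neighbor in $U_i$ are correct but inessential additions to the same computational argument.
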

That is, any element in the set with a smaller number of elements is always included in the maximum matching.

\subsection{Chain division of $\mathbb{G}_3$}
Next, we divide $\mathbb{G}_3$ by another way.
\begin{definition}
$S=T_1 \oplus T_2 \oplus \cdots \oplus T_m$ is a {\em chain division} of $S$ by $T_1, T_2, \ldots, T_m$ if for any $i$, every $s, t \in T_i (s \neq t)$ satisfies $s>t$ or $s<t$.   
\end{definition}
For example, Figure \ref{t_2} shows a chain division of $\mathbb{G}_2$ by $T_1, T_2, T_3,$ and
 $T_4$.

\begin{figure}[tb]
\begin{tikzpicture}[auto]
\node (a) at (1.5, 0) {$-2$}; 
\node (b) at (0, 1.5) {$-1$};
\node (c) at (3, 1.5) {$-1*$};
\draw[-] (a) to (c);
\node (d) at (1.5, 3) {$-\frac{1}{2}$};
\node (e) at (3, 3) {$\{*\mid -1\}$};
\node (f) at (4.5, 3) {$\{0 \mid -1\}$};
\draw[-] (b) to (d);
\draw[-] (c) to (f);
\node (g) at (1.5, 4.5) {$\downarrow$};
\node (h) at (3, 4.5) {$\downarrow *$};
\node (i) at (4.5, 4.5) {$\{0, * \mid -1\}$};
\draw[-] (d) to (h);
\draw[-] (e) to (g);
\draw[-] (f) to (i);
\node (j) at (0, 6) {$0$};
\node (k) at (1.5, 6) {$*$};
\node (l) at (3,6) {$*2$};
\node (m) at (4.5,6) {$\pm 1$};
\draw[-] (g) to (j);
\draw[-] (h) to (k);
\draw[-] (i) to (m);
\node (n) at (1.5, 7.5) {$\uparrow$};
\node (o) at (3, 7.5) {$\uparrow*$};
\node (p) at (4.5, 7.5) {$\{1 \mid 0,*\}$};
\draw[-] (j) to (n);
\draw[-] (k) to (o);
\draw[-] (m) to (p);
\node (q) at(1.5, 9) {$\frac{1}{2}$};
\node (r) at(3,9) {$\{1 \mid * \}$};
\node (s) at (4.5,9) {$\{1 \mid 0\}$};
\draw[-] (n) to (r);
\draw[-] (o) to (q);
\draw[-] (p) to (s);
\node (t) at(0, 10.5) {$1$};
\node(u) at (3,10.5){$1*$};
\draw[-] (q) to (t);
\draw[-] (s) to (u);
\node (v) at(1.5,12) {$2$};
\draw[-] (u) to (v);
\node(t1) at(1,12.5) {$T_1$};
\node(t2) at(-0.5, 11) {$T_2$};
\node(t3) at(0.5,7) {$T_3$};
\node(t4) at(3.3,5.7) {$T_4$};
\end{tikzpicture}
\caption{Chain division of games born by day $2$}
\label{t_2}
\end{figure}

Next, we consider the chain division of $\mathbb{G}_3$. We regard every element in $\mathbb{G}_3$ as a vertex and the order of elements as edges.
Here, we consider deleting all edges except for the matching obtained from Lemma \ref{match}.
Then, from every element in $U_1, U_2, \ldots, U_{21},$ or $U_{22},$ one can reach to an element in $U_{23}$ through some edges. Further, from Lemma \ref{jouge}, there is an upper and lower symmetry and therefore, from every element in $U_{24}, U_{25}, \ldots, U_{44},$ and $U_{45},$ one can also reach to an element in $U_{23}$ through corresponding edges.
Thus, there exists as many chains as elements in $U_{23}$.

\begin{theorem}
By sets $T_1, T_2, \ldots, T_{86}$, we have a chain division
$\mathbb{G}_3 = T_1 \oplus T_2 \oplus \cdots \oplus T_{86}.$
\end{theorem}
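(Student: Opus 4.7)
The plan is to build the $86$ chains by fixing one maximum matching $M_i$ in each $\mathcal{G}_i$ for $i = 1, \ldots, 44$ and using them to propagate from each element of $U_{23}$ outward. By Lemma \ref{match} together with the size pattern $|U_i| < |U_{i+1}|$ for $i \leq 22$ and $|U_i| > |U_{i+1}|$ for $i \geq 23$, each $M_i$ with $i \leq 22$ matches every element of $U_i$ to a strictly smaller element of $U_{i+1}$, and each $M_i$ with $i \geq 23$ matches every element of $U_{i+1}$ to a strictly greater element of $U_i$. Composing these matchings downward from any $u \in U_j$ with $j \leq 22$ and upward from any $u \in U_j$ with $j \geq 24$ then yields a canonically defined map $\pi : \mathbb{G}_3 \to U_{23}$, extended by $\pi(u) = u$ on $U_{23}$ itself.

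Next, I would define $T_v := \pi^{-1}(v)$ for each $v \in U_{23}$ and verify that $\{T_v : v \in U_{23}\}$ is a chain division. Disjointness and coverage are immediate from $\pi$ being a well-defined function on all of $\mathbb{G}_3$. That each $T_v$ is totally ordered follows from three observations: matching edges in $M_i$ correspond to strict comparisons in $\mathbb{G}_3$ by construction of $\mathcal{G}_i$; the injectivity of each matching ensures $T_v$ contains at most one element per level $U_j$; and concatenating the downward chain from any $a \in U_{j_1}$ (with $j_1 \leq 22$) through $v$ with the upward chain from $v$ to any $b \in U_{j_2}$ (with $j_2 \geq 24$) produces a strictly decreasing sequence $a > \cdots > v > \cdots > b$. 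Since every $v \in U_{23}$ lies in its own $T_v$, there are exactly $|U_{23}| = 86$ nonempty classes, which, relabeled as $T_1, \ldots, T_{86}$, give the claimed chain division.

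The main obstacle I anticipate is ensuring the propagation is genuinely well-defined for every element, which rests entirely on the specific form of Lemma \ref{match}: that the smaller side of each $\mathcal{G}_i$ is fully saturated by some maximum matching. Lemma \ref{jouge} serves mainly as a structural aid explaining why the upward construction below $U_{23}$ mirrors the downward construction above $U_{23}$; one could instead argue the two halves symmetrically from the matching facts alone. Once these matching-theoretic facts are in hand, verifying that each $T_v$ is a chain reduces to a short induction along consecutive levels, and the counting is then a direct consequence of $\pi$ being surjective onto $U_{23}$.
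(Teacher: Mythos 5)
Your proposal is correct and follows essentially the same route as the paper: both fix the maximum matchings guaranteed by Lemma \ref{match}, use the unimodal sizes $|U_1|<\cdots<|U_{23}|>\cdots>|U_{45}|$ (together with the symmetry of Lemma \ref{jouge}) to saturate the smaller side at every level, and propagate each element along matching edges to a unique representative in $U_{23}$, whose $86$ fibers are the chains. Your write-up merely makes explicit the map $\pi$ and the verification that each fiber is totally ordered, which the paper leaves informal.
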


We caluculated using programming and obtained a chain division. The length of each chain is shown in \ref{tableT}. 

We also obtained the following result using the pigeonhole principle and designating that $U_{23}$ has $86$ elements.

\begin{table}[tb]
\caption{Number of elements in $T_i$}
\label{tableT}
\begin{tabular}{r|cccccccccccccccccccccc}
\hline
\hline
$i$ & 1 & 2 & 3 & 4 & 5 & 6 & 7 & 8 & 9 & 10 & 11 & 12 & 13 & 14 & 15 & 16 & 17 & 18 & 19 & 20 & 21 & 22 \\ \hline
$|T_i|$ & 30 & 32 & 37 & 23 & 26 & 37 & 20 & 28 & 35 & 21 & 33 & 37 & 23 & 20 & 29 & 21 & 29 & 27 & 20 & 27 & 25 & 16 \\
\end{tabular}


\begin{tabular}{cccccccccccccccccccccccc}
\hline
\hline
23 & 24 & 25 & 26 & 27 & 28 & 29 & 30 & 31 & 32 & 33 & 34 & 35 & 36 & 37 & 38 & 39 & 40 & 41 & 42 & 43 & 44 & 45 & 46 \\ \hline
24 & 25 & 24 & 13 & 17 & 21 & 32 & 21 & 19 & 28 & 19 & 19 & 17 & 17 & 12 & 17 & 17 & 15 & 30 & 11 & 23 & 11 & 13 & 20\\
\end{tabular}


\begin{tabular}{cccccccccccccccccccccccc}
\hline
\hline
47 & 48 & 49 & 50 & 51 & 52 & 53 & 54 & 55 & 56 & 57 & 58 &59 & 60 & 61 & 62 & 63 & 64 & 65 & 66 & 67 & 68 & 69 & 70  \\ \hline
17 & 26 & 21 & 13 & 9 & 13 & 11 & 9 & 11 & 10  &11 & 13 & 9 & 20 & 21 & 10 & 6 & 9 & 9 & 23 & 16 & 9  & 11 & 11\\
\end{tabular}


\begin{tabular}{cccccccccccccccc}
\hline
\hline
71 & 72 & 73 & 74 & 75 & 76 & 77 & 78 & 79 & 80 & 81 & 82 & 83 & 84 & 85 & 86\\ \hline
12 & 7 & 5 & 4 & 9 & 5 & 5 & 6 & 4 & 20  & 13 & 1 & 1 & 1 & 1 & 1\\
\end{tabular}

\end{table}

\begin{lemma}
\label{lem86}
The anti-chain in $\mathbb{G}_3$ with the largest number of elements has 86 elements.
\end{lemma}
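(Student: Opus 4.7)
The plan is to combine the two structures that have just been built, namely the stratification $U_1, \ldots, U_{45}$ and the chain division $T_1, \ldots, T_{86}$, via a short pigeonhole argument. Since the existence of the chain division into exactly $86$ chains has already been established in the preceding theorem, and $U_{23}$ is an explicit anti-chain of size $86$, the lemma is at heart Dilworth's inequality in its easy direction.

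First I would give the upper bound. Let $A \subseteq \mathbb{G}_3$ be any anti-chain, and consider the chain partition $\mathbb{G}_3 = T_1 \oplus T_2 \oplus \cdots \oplus T_{86}$. By definition of $T_i$, any two distinct elements of $T_i$ are comparable, so $A \cap T_i$ can contain at most one element (otherwise $A$ would contain two comparable elements, contradicting that it is an anti-chain). Summing over $i$, we get $|A| = \sum_{i=1}^{86} |A \cap T_i| \leq 86$. This is the pigeonhole step alluded to in the text.

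Next I would show the bound is attained. By Algorithm \ref{algo1}, every $U_i$ is already an anti-chain, and from Table \ref{tableU} we have $|U_{23}| = 86$. Thus $A = U_{23}$ witnesses an anti-chain of size exactly $86$, and combined with the upper bound this proves that the maximum is $86$.

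There is no real obstacle here beyond citing the previous theorem and the table; the work has been front-loaded into the construction of the chain division (which in turn relied on Lemma \ref{match} and Lemma \ref{jouge}). The only thing to be slightly careful about is to make explicit why $A \cap T_i$ has at most one element, i.e.\ to invoke the definition of chain division rather than appealing vaguely to ``pigeonhole,'' so that the reader sees that this is exactly one half of Dilworth's theorem applied to the partition already in hand.
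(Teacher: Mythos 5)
Your proposal is correct and is essentially the argument the paper intends: the paper states that the lemma follows ``using the pigeonhole principle and designating that $U_{23}$ has $86$ elements,'' i.e.\ the chain division into $86$ chains bounds every anti-chain by $86$ (at most one element per chain), and the anti-chain $U_{23}$ attains this bound. You have simply written out explicitly what the paper leaves implicit.
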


\section{Improving upper and lower bounds of $|\mathbb{G}_4|$}
\label{sec4}
\subsection{Improving of lower bounds}
\label{subsec41}
First, in order to improve the lower bound, we prepared the following lemma,
\begin{lemma}
\label{at}
Assume that $n>1$. We also assume that $S=\{s_1,s_2,\ldots,s_k\}$ is an anti-chain of games born by day $n$. That is, for any $i,j$, $s_i, s_j \in \mathbb{G}_n$ and $s_i \not\lessgtr s_j$ holds. Then, $\{n \mid s_1, s_2, \ldots, s_k\}, \{s_1, s_2, \ldots, s_k\mid -n\}, \{n-1 \mid s_1, s_2, \ldots, s_k\}, $ and $\{s_1, s_2, \ldots, s_k\mid -(n-1)\}\in \mathbb{G}_{n+1}$ are canonical forms.

\end{lemma}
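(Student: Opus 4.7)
For each of the four games, the task splits into two subgoals: showing that it lies in $\mathbb{G}_{n+1}$, and showing that it is already in canonical form, i.e. that it has no dominated and no reversible option. Membership in $\mathbb{G}_{n+1}$ is immediate, since every listed option ($\pm n$, $\pm(n-1)$, and the $s_i$) lies in $\mathbb{G}_n$.

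The key input I plan to invoke is the standard fact, proved by a one-line induction against the integer $m$, that every $H\in\mathbb{G}_m$ satisfies $-m\le H\le m$. With this in hand, dominance is trivial in each case: the single Left option (or single Right option, for the two negative-facing games) has nothing it could be compared against, and the $s_i$ on the other side are pairwise incomparable by hypothesis, so no $s_i$ dominates another.

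The substantive step is reversibility. Consider $G=\{n\mid s_1,\ldots,s_k\}$ first. The Left option $n$ has no Right option (since $n\ge 2$), so it cannot be reversible. For a Right option $s_i$, I must rule out $H\ge G$ for every Left option $H$ of $s_i$. If such an $H$ existed, then in $H-G$ Right could answer by moving $-G$ to $-n$, forcing Left to win $H-n$ with Left to move; but $H\in\mathbb{G}_{n-1}$ gives $H\le n-1$, hence $H-n\le -1<0$, so Left loses, a contradiction. The same argument applied to $G'=\{n-1\mid s_1,\ldots,s_k\}$ settles that case as well; the one subtlety is that the analogous estimate now yields only $H-(n-1)\le 0$ rather than a strict inequality, which is still enough since Left moving first in a game $\le 0$ loses. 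The hypothesis $n>1$ is used precisely so that $n-1$ itself has no Right option and thus cannot be Left-side reversible.

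Finally, $\{s_1,\ldots,s_k\mid -n\}$ and $\{s_1,\ldots,s_k\mid -(n-1)\}$ are the negations of the two games just treated, applied to the antichain $\{-s_1,\ldots,-s_k\}\subset\mathbb{G}_n$; since being in canonical form is preserved under negation, these two are canonical as well. The main delicate point in the whole argument is the boundary case $H=n-1$ in the analysis of $G'$, where one has to notice that $0$ is a loss for the player to move and thus $H\ge G'$ still fails.
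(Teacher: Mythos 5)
Your proof is correct and follows essentially the same route as the paper's: dominance is dismissed via the singleton integer option and the anti-chain hypothesis, the integer option cannot reverse because it has no Right option, and a Right option $s_i$ cannot reverse because any Left option $H$ of $s_i$ is born by day $n-1$, so the bound $H\le n-1$ forces $H\ge G$ (equivalently $G\le s_i^L$) to fail; the paper analyzes the Left move to $n-1-s_i^L$ in $G-s_i^L$ while you analyze the Right move to $H-n$ in $H-G$, which is the same computation up to negation. (Your aside that $n>1$ is needed "precisely so that $n-1$ has no Right option" is slightly off, since $0$ also has no Right option, but this does not affect the argument for $n>1$.)
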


\begin{proof}
Consider $\{n - 1 \mid s_1, s_2, \ldots, s_k\}\in \mathbb{G}_{n+1}$.
If this is not a canonical form, then there is a dominated or reversible option in the left or right options.


First, there is only one left option, $n-1$, which is not a dominated option.
The right options also have no dominated option because the set of all right options is an anti-chain. 
Further, the left option $n-1$ has no right option; therefore, this is not a reversible option.

Intuitively, the remaining case is that a right option, $s_i$, is a reversible option.
If $s_i$ is a reversible option, then $s_i$ has a left option $s_i^L$ and it satisfies $\{n - 1 \mid s_1, s_2, \ldots, s_k\}\leq s_i^L$. However, considering 
$\{n - 1 \mid s_1, s_2, \ldots, s_k\} - s_i^L$, this game has a left option $n - 1 - s_i^L$. It is known that for any game where $g$ is born by day $n$, $n \geq g$ holds. As $s_i$ is born by day $n$, $s_i^L$ is born by day $n-1$. Thus, $n - 1 \geq s_i^L$. This yields $n - 1 - s_i^L \geq 0$ and $\{n - 1 \mid s_1, s_2, \ldots, s_k\} - s_i^L \mid> 0$, which means $\{n - 1 \mid s_1, s_2, \ldots, s_k\}\leq s_i^L$ does not happen. Other cases are proved in similar way.
\end{proof}

From lemmas \ref{lem86} and \ref{at}, we immediately obtain the following result. 
\begin{corollary}
\label{c1}
$4 \cdot 2^{86} = 2^{88} \leq |\mathbb{G}_4|.$
\end{corollary}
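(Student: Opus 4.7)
My strategy is to use the $86$-element antichain from Lemma \ref{lem86} as a reservoir and exploit the fact that every subset of an antichain is itself an antichain. Fix an $86$-element antichain $A \subseteq \mathbb{G}_3$; for definiteness take $A = U_{23}$. Each of the $2^{86}$ subsets $S \subseteq A$ is an antichain of games born by day $3$, so Lemma \ref{at} with $n = 3$ produces, for each such $S$, four canonical forms in $\mathbb{G}_4$, namely $\{3 \mid S\}$, $\{S \mid -3\}$, $\{2 \mid S\}$, and $\{S \mid -2\}$. Altogether this construction exhibits $4 \cdot 2^{86}$ elements of $\mathbb{G}_4$.

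The only remaining step is to verify that these games are pairwise distinct, and this is the main (and essentially only) obstacle. Two canonical forms coincide if and only if their sets of left options and of right options agree. Within a single template, the assignment $S \mapsto \{3 \mid S\}$ (and similarly for the other three) is injective because the right-option sets (or left-option sets) differ whenever $S \ne S'$. Across templates, the same-direction pairs $\{3 \mid S_1\}$ vs.\ $\{2 \mid S_2\}$ and $\{S_1 \mid -3\}$ vs.\ $\{S_2 \mid -2\}$ are separated by the distinguished integer on the fixed side. The remaining cross-direction comparisons, such as $\{3 \mid S_1\} = \{S_2 \mid -3\}$, would force $S_2$ to equal the singleton $\{3\}$ as a subset of $A$ (and analogously $\{2\}$ in the other cases). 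I would rule these out by noting that $\pm 3 \in U_1 \cup U_{45}$ and $\pm 2 \in U_2 \cup U_{44}$, so none of the four extremal games $\pm 2, \pm 3$ lies in $A = U_{23}$; hence no such collision can occur.

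With distinctness established, $|\mathbb{G}_4| \geq 4 \cdot 2^{86} = 2^{88}$ follows immediately. The key observation enabling the short argument is that choosing $A$ in the middle layer of the stratification (rather than an arbitrary maximum-size antichain) automatically avoids the extremal integers, collapsing the entire distinctness analysis into a one-line remark about which $U_i$ contains $\pm 2$ and $\pm 3$.
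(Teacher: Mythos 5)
Your proposal is correct and takes essentially the same route as the paper, which obtains the corollary "immediately" from Lemmas \ref{lem86} and \ref{at} by feeding all $2^{86}$ subsets of the maximum antichain into the four templates of Lemma \ref{at} with $n=3$. Your explicit pairwise-distinctness check (via uniqueness of canonical forms and the observation that $\pm 2, \pm 3 \notin U_{23}$) merely spells out a detail the paper leaves implicit.
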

As $2 > 10^{0.3}$, $10^{26.4} < 2^{88} \leq |\mathbb{G}_4|$, then we have successed in significantly improving the previously known lower bound $3 \cdot 10^{12}$.


\subsection{Further improvement of the lower bound}
\label{subsec42}
This result can be improved by scrutinizing the number of anti-chains.
For any element $u$ in $U_{22}$, we checked the number of elements that are smaller than $u$ and included in $U_{23}$. Then there are $9, 25, 33, 14$ elements which have $2, 3, 4, 5$ smaller elements in $U_{23}$ respectively.
Therefore, the number of anti-chains which have one element from $U_{22}$ and some (possibly 0 as well) elements from $U_{23}$ is $9 \cdot 2^{84} + 25 \cdot 2^{83} + 33 \cdot 2^{82} + 14 \cdot 2^{81}= (9 \cdot 8 + 25 \cdot 4 + 33 \cdot 2 +14)2^{81} = 252  \cdot 2^{81}$.



From lemma \ref{jouge}, $U_{22}$ and $U_{24}$ have a sign-reversed relationship of elements and for every element in $U_{23}$, its inverse is also in $U_{23}$. Therefore, the number of anti-chains with one element from $U_{24}$ and some (possibly 0 as well) elements from $U_{23}$ is the same.


We also consider the number of anti-chains which have two elements from $U_{22}$ and some (possibly 0 as well) elements from $U_{23}$. It is at least $\frac{9 \cdot 8}{2} \cdot 2^{82} + 9 \cdot 25 \cdot 2^{81} + \frac{25 \cdot 24}{2} \cdot 2^{80} + 9 \cdot 33 \cdot 2^{80} + 9 \cdot 14 \cdot 2^{79} + 25 \cdot 33 \cdot 2^{79} + 25\cdot 14 \cdot 2^{78} + \frac{33 \cdot 32}{2} \cdot 2^{ 78} + 33\cdot 14 \cdot 2^{77} + \frac{14\cdot 13}{2}\cdot 2^{76}$. By calculating, this is $(2304 + 7200 + 4800 + 4752 + 1008 + 6600 + 1400 + 2112 + 924 + 91) \cdot 2^{76} = 31191 \cdot 2^{76}$.


We can also use upper and lower symmetry. Therefore, calculating the total number obtained so far yields $2 ^ {86} + 2 \cdot 252 \cdot 2^{81} + 2 \cdot 31191 \cdot 2^{76} = (2^9 + 252 \cdot 2^5 + 31191) \cdot 2^{77} = (512 + 8064 + 31191) \cdot 2^{77} = 39767 \cdot 2^{77} > 2^{15} \cdot 2^{77} = 2^{92}$. Thus, the following holds.
\begin{corollary}
\label{c1_2}
$4 \cdot 2^{92} = 2^{94} < |\mathbb{G}_4|.$
\end{corollary}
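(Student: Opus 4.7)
The plan is to refine the $2^{86}$ sub-anti-chains of $U_{23}$ used in the proof of Corollary \ref{c1} by also counting anti-chains that pick up one or two extra elements from the adjacent strata $U_{22}$ and $U_{24}$, and then invoke Lemma \ref{at} to convert each such anti-chain of $\mathbb{G}_3$ into four distinct canonical forms of $\mathbb{G}_4$.

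First I would classify the $81$ elements of $U_{22}$ according to the number $k$ of their lower neighbours in $U_{23}$; by a direct check against the stratification data these counts are $9$, $25$, $33$, $14$ for $k=2,3,4,5$ respectively. The key observation is that since $u \in U_{22}$ is maximal in the stratum below $U_{21}$ and $U_{23}$ itself is an anti-chain, $u$ is comparable with $t \in U_{23}$ only when $u > t$. Hence, for each $u$ in the class of size $k$, every subset of the remaining $86-k$ elements of $U_{23}$ extends $\{u\}$ to an anti-chain, yielding $2^{86-k}$ anti-chains containing $u$ as their only element of $U_{22}$. Summing over the four classes gives $252 \cdot 2^{81}$ such anti-chains, and Lemma \ref{jouge} doubles the total by symmetry with $U_{24}$.

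Next I would count anti-chains containing two distinct elements $u_1, u_2 \in U_{22}$ from classes of sizes $k_1$ and $k_2$. The same argument shows that any subset of the elements of $U_{23}$ not lying below either $u_i$ extends $\{u_1, u_2\}$ to an anti-chain; since at most $k_1 + k_2$ elements of $U_{23}$ are excluded, there are at least $2^{86-k_1-k_2}$ such anti-chains. Enumerating the ten unordered pairs of class sizes gives the asserted total of $31191 \cdot 2^{76}$, and symmetry doubles it once more. Combining the contributions of zero, one, and two extra elements and multiplying by $4$ from Lemma \ref{at} gives $|\mathbb{G}_4| \geq 4(2^{86} + 2 \cdot 252 \cdot 2^{81} + 2 \cdot 31191 \cdot 2^{76})$, and the arithmetic $39767 > 2^{15}$ finishes the inequality.

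The main obstacle is bookkeeping rather than substance: one must verify the classification of $U_{22}$ against the concrete list of its $81$ elements (a data-level check on the stratification), and one must note that the $2^{86-k_1-k_2}$ estimate is deliberately pessimistic when the two lower-neighbour sets overlap — a safe under-count since only a lower bound is required. No game-theoretic input beyond Lemmas \ref{at} and \ref{jouge} enters the argument.
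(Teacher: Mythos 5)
Your proposal is correct and follows essentially the same route as the paper: the same classification of the $81$ elements of $U_{22}$ into classes of sizes $9,25,33,14$ by their number of lower neighbours in $U_{23}$, the same counts $252\cdot 2^{81}$ and $31191\cdot 2^{76}$ for anti-chains with one or two extra elements, the same doubling via Lemma~\ref{jouge}, and the same final step $4\cdot 39767\cdot 2^{77} > 4\cdot 2^{92}$ using Lemma~\ref{at}. Your added remarks --- that comparability between $U_{22}$ and $U_{23}$ can only go downward, and that $2^{86-k_1-k_2}$ is a safe under-count when the neighbour sets overlap --- are correct justifications that the paper leaves implicit.
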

Thus,  $10^{28.2}< |\mathbb{G}_4|$.

\subsection{Improving of the upper bound}
\label{subsec43}
Next, we consider the upper bound. A canonical form has two sets of left and right options as anti-chains. Therefore, by using chain division, the following lemma  holds. 
\begin{lemma}
\label{lem431}
For any game $G \in \mathbb{G}_4$, let the set of games in left options of $G$ be $S=\{s_1, s_2, \ldots, s_m\}$. Then, for any $i,j(i \neq j)$, if $s_i \in T_x, s_j \in T_y$ then $x \neq y$. This is also true for the set of games in right options.
\end{lemma}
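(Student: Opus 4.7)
The plan is to argue directly from the definition of canonical form together with the definition of a chain division. Recall that by convention $\mathbb{G}_4$ denotes the set of canonical forms born by day $4$, so the hypothesis $G \in \mathbb{G}_4$ gives that $G$ is already in canonical form. In particular, the set of left options of $G$ must be an anti-chain with respect to $<$: otherwise one left option would dominate another, contradicting canonicity.

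With this in hand, I would proceed by contradiction. Suppose $s_i, s_j$ are distinct left options of $G$ with $s_i, s_j \in T_x$ for some common index $x$. By the definition of chain division recalled in Definition~1, any two distinct elements of $T_x$ satisfy $s_i < s_j$ or $s_i > s_j$. Either inequality exhibits a dominated left option in $G$, so $G$ cannot be in canonical form. This contradicts $G \in \mathbb{G}_4$, completing the argument for left options.

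The case of right options is symmetric: in a canonical form the set of right options is also an anti-chain (otherwise the larger of two comparable right options is dominated and can be removed), and the same chain-versus-anti-chain contradiction applies. Since both options sets of $G$ are subsets of $\mathbb{G}_3$, we may legitimately use the chain division $\mathbb{G}_3 = T_1 \oplus T_2 \oplus \cdots \oplus T_{86}$ constructed in the previous section.

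There is essentially no obstacle here; the lemma is a one-line consequence of \emph{anti-chain of options} $+$ \emph{chain partition}. The only point to be careful about is to note explicitly that membership in $\mathbb{G}_4$ already packages the canonical form requirement, so that we may assume both the left and right option sets are anti-chains without any further reduction argument.
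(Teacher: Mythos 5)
Your proof is correct and is essentially the paper's own argument: the paper likewise observes that the option sets of a canonical form are anti-chains and derives the contradiction from the comparability of distinct elements within a single chain $T_x$. You simply spell out the steps (canonicity of games in $\mathbb{G}_4$, options lying in $\mathbb{G}_3$, domination) that the paper leaves implicit.
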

\begin{proof}
If $s_i \in T_x$ and $s_j \in T_y,$ then $s_i < s_j$ or $s_i > s_j$ holds, which is a contradiction.
\end{proof}
From this lemma, the following holds.
\begin{lemma}
$|\mathbb{G}_4|\leq ((|T_1| + 1) \times (|T_2| + 1) \times \cdots \times (|T_{86}| + 1))^2$
\end{lemma}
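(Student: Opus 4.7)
The plan is to bound $|\mathbb{G}_4|$ by counting the possible pairs of left and right option sets and then using the chain division of $\mathbb{G}_3$ to count anti-chains. First I would note that every $G \in \mathbb{G}_4$ is, by the convention of the paper, a canonical form. Each option of $G$ has strictly smaller game-tree height, so it lies in $\mathbb{G}_3$, and since $G$ is canonical its set $G^\mathcal{L}$ of left options and its set $G^\mathcal{R}$ of right options each contains no comparable pair (otherwise the larger left option or smaller right option would be dominated). Hence $G^\mathcal{L}$ and $G^\mathcal{R}$ are anti-chains in $\mathbb{G}_3$, and $G$ is uniquely determined by the ordered pair $(G^\mathcal{L},G^\mathcal{R})$; thus
\[
|\mathbb{G}_4|\leq (\text{number of anti-chains in }\mathbb{G}_3)^2.
\]

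Next I would count the anti-chains in $\mathbb{G}_3$ using the chain decomposition $\mathbb{G}_3 = T_1\oplus T_2\oplus\cdots\oplus T_{86}$. By Lemma \ref{lem431}, any anti-chain $S$ of $\mathbb{G}_3$ meets each chain $T_i$ in at most one element: if it met some $T_i$ in two elements they would be comparable, contradicting $S$ being an anti-chain. Therefore an anti-chain $S$ is specified by choosing, independently for each $i\in\{1,\ldots,86\}$, either one of the $|T_i|$ elements of $T_i$ or no element at all. This gives at most $|T_i|+1$ choices per chain, so the number of anti-chains in $\mathbb{G}_3$ is at most $\prod_{i=1}^{86}(|T_i|+1)$. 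Combining with the previous displayed inequality yields the claim.

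The argument is short and has no serious obstacle; the only things to be careful about are (i) verifying that options of a canonical form in $\mathbb{G}_4$ lie in $\mathbb{G}_3$ and form anti-chains, which is immediate from the definitions of game-tree height and canonical form, and (ii) not over-counting: the bound is deliberately loose because many of the $\prod_i(|T_i|+1)$ selections will fail to be anti-chains (elements from different chains may still be comparable) and many pairs of anti-chains will not yield canonical forms (dominated/reversible options across the two sides, or equal forms arising from different option sets). Since we only need an upper bound, these slacknesses are acceptable.
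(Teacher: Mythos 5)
Your proposal is correct and follows essentially the same route as the paper: both use the fact that the left and right option sets of a canonical form in $\mathbb{G}_4$ are anti-chains in $\mathbb{G}_3$, invoke Lemma \ref{lem431} to conclude each anti-chain meets each chain $T_i$ in at most one element, and multiply the $|T_i|+1$ choices per chain, squaring for the two sides. Your additional remarks on where the bound is loose are accurate but not needed for the inequality.
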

\begin{proof}
From Lemma \ref{lem431}, in the set of left options of a canonical form born by day $4$, there is at most one element in $|T_i|$ for each $1 \leq i \leq 86$. This also holds in the set of right options. Therefore,  $|\mathbb{G}_4|\leq ((|T_1| + 1) \times (|T_2| + 1) \times \cdots \times (|T_{86}| + 1))^2$.
\end{proof}
By calculating this value, we have about $3.7979\times 10^{202}$. Therefore, we have the following result.
\begin{corollary}
\label{c2}
$|\mathbb{G}_4|< 3.8 \cdot 10^{202}.$
\end{corollary}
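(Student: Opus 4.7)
The corollary is essentially a direct numerical consequence of the preceding lemma, so my plan is to reduce it to a single arithmetic verification. The plan is as follows. First, I would invoke the lemma, which gives
\[
|\mathbb{G}_4| \le \left(\prod_{i=1}^{86}(|T_i|+1)\right)^2,
\]
so the task reduces to bounding the product $P = \prod_{i=1}^{86}(|T_i|+1)$ from above using the chain sizes tabulated in Table~\ref{tableT}.

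Next, I would convert to logarithms base $10$ to make the computation tractable: write
\[
\log_{10} P = \sum_{i=1}^{86}\log_{10}(|T_i|+1),
\]
and substitute the $86$ tabulated values $|T_i|$ (which range from $1$ up to $37$). Grouping terms by the value of $|T_i|+1$ — e.g., counting how many chains have size $1$, size $4$, size $5$, etc. — lets one evaluate this sum by multiplying $\log_{10}(k)$ by the multiplicity of each $k$ that appears. Numerically one expects $\log_{10}P \approx 101.29$, so $P^2 \approx 3.7979 \cdot 10^{202}$, consistent with the figure stated in the paper.

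Finally, I would conclude $P^2 < 3.8 \cdot 10^{202}$, and combine with the lemma to obtain $|\mathbb{G}_4| < 3.8 \cdot 10^{202}$, as required.

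There is essentially no conceptual obstacle here; the only real care needed is bookkeeping. Specifically, one must make sure that the $86$ entries of Table~\ref{tableT} are all accounted for (the table is split across four sub-tables, with indices $1$--$22$, $23$--$46$, $47$--$70$, and $71$--$86$), and that the rounding at the final step is performed in the safe direction — using a slight upper-rounding of $\log_{10}(|T_i|+1)$ at each summand so that the concluding inequality $P^2 < 3.8 \cdot 10^{202}$ (rather than a mere equality in the third decimal place) is rigorously justified.
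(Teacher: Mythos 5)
Your proposal matches the paper's proof: Corollary~\ref{c2} is obtained there exactly by taking the bound $\bigl(\prod_{i=1}^{86}(|T_i|+1)\bigr)^2$ from the preceding lemma and evaluating it numerically from Table~\ref{tableT}, yielding about $3.7979\times 10^{202} < 3.8\cdot 10^{202}$. Your logarithmic bookkeeping and the caution about rounding upward are just a careful way of doing the same computation, so there is nothing further to add.
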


\subsection{Further improvement of the upper bound}
\label{subsec44}
We also consider the further improvement of this upper bound.
For each element in $T_i$, count incompareble elements in $T_j(i<j)$ and let $t_{i,j}$ be their maximum. That is, $t_{i,j} = \max(|\{t' \in T_{j} \mid t \not \lessgtr t'\}|)_{t \in T_i}$.
In addition, let $S_i = |T_i| \times (t_{i,i+1} + 1) \times (t_{i,i + 2} + 1) \times \cdots \times (t_{i,86} + 1)$. Then, $S_i$ is an upper bound of the number of anti-chains that do not include any elements in $T_k(k<i)$ and include an element in $T_i$. Therefore, $|\mathbb{G}_4| \leq (\sum_{i = 1}^{86} S_i + 1) ^ 2$.

By calculating this, we obtained the following corollary. The values of $S_i$ are shown in \ref{tables}. 

Since $\sum_{i=k}^{86}S_i \leq (|T_k| + 1) \times (|T_{k+1}| + 1) \times \cdots \times (|T_{86}| + 1)$, and since either $\sum_{i=k}^{86}S_i$ and $(|T_k| + 1) \times (|T_{k+1}| + 1) \times \cdots \times (|T_{86}| + 1)$ has little effect on the upper bound as $i$ increases, we obtained values of $S_i$ up to $S_7$ and took the upper bound as $|\mathbb{G}_4| \leq (\sum_{i = 1}^{7} S_i + (|T_8| + 1) \times (|T_{9}| + 1) \times \cdots \times (|T_{86}| + 1))$. Here, $(|T_8| + 1) \times (|T_{9}| + 1) \times \cdots \times (|T_{86}| + 1))<1.0 \cdot 10^{91}$.


\begin{table}[tb]
\caption{Value of $S_i$}
\label{tables}
\begin{center}
\begin{tabular}{r|c}
\hline \hline

$S_1$ & $<4.0 \cdot 10^{90}$ \\ 
$S_2$ & $<1.8 \cdot 10^{92}$ \\ 
$S_3$ & $<1.5 \cdot 10^{89}$ \\ 
$S_4$ & $<1.5 \cdot 10^{87}$ \\ 
$S_5$ & $<5.5 \cdot 10^{86}$ \\ 
$S_6$ & $<3.0 \cdot 10^{87}$ \\ 
$S_7$ & $<3.0 \cdot 10^{83}$ \\ 

\end{tabular}
\end{center}
\end{table}

\begin{corollary}
\label{c2_2}
$|\mathbb{G}_4| < 4.0 \cdot 10^{184}.$ 
\end{corollary}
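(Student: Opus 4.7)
The plan is to feed the numerical data of Table \ref{tables} into the bound $|\mathbb{G}_4| \leq \bigl(\sum_{i=1}^{86} S_i + 1\bigr)^2$ established in the preceding lemma, splitting the sum at $i=7$ and handling the tail by a cheap product bound so that only the first seven $S_i$'s ever need to be evaluated precisely.

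First, from Table \ref{tables} the seven computed entries sum to at most roughly $1.9\cdot 10^{92}$, with the bulk coming from $S_2 < 1.8\cdot 10^{92}$ and the remaining six terms being one or more orders of magnitude smaller. For the tail $\sum_{i=8}^{86} S_i$, the key observation is that this partial sum is itself an upper bound on the number of non-empty antichains in $\mathbb{G}_3$ whose lowest-indexed chain representative lies in some $T_i$ with $i\geq 8$; since any such antichain contains at most one element from each $T_i$ (by Lemma \ref{lem431}), the number of such antichains is in turn bounded by
$$
\prod_{i=8}^{86}\bigl(|T_i|+1\bigr),
$$
which a straightforward computation from Table \ref{tableT} shows to be less than $10^{91}$.

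Combining the head, the tail, and the ``$+1$'' for the empty antichain then gives $\sum_{i=1}^{86} S_i + 1 < 2\cdot 10^{92}$, and squaring yields $|\mathbb{G}_4| < 4.0\cdot 10^{184}$. The real obstacle in this proof is computational rather than conceptual: producing the $S_i$ values for $1\leq i\leq 7$ requires, for each such $i$, computing the incomparability counts $t_{i,j}$ against every later chain $T_j$, which is a sizeable but mechanical post-processing sweep over the chain decomposition of Section \ref{sec3}. Once the head entries are tabulated, the rest of the proof is just the bookkeeping sketched above.
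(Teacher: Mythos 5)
Your proposal follows essentially the same route as the paper: both use the bound $|\mathbb{G}_4| \leq \bigl(\sum_{i=1}^{86} S_i + 1\bigr)^2$, compute $S_1,\ldots,S_7$ explicitly, bound the tail $\sum_{i=8}^{86} S_i$ by $\prod_{i=8}^{86}(|T_i|+1) < 1.0\cdot 10^{91}$, and conclude that the total is under $2\cdot 10^{92}$ before squaring. The arithmetic and the splitting point at $i=7$ match the paper exactly, so this is correct and not a different proof.
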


Therefore, from Corollaries \ref{c1_2} and \ref{c2_2}, we can obtain the following theorem, significantly improving	 previously known upper and lower bounds. 

\begin{theorem}
$10^{28.2} < |\mathbb{G}_4|< 4.0 \cdot 10^{184}.$
\end{theorem}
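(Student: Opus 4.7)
The plan is simply to chain together the two corollaries already established. The upper bound $|\mathbb{G}_4| < 4.0 \cdot 10^{184}$ is precisely the content of Corollary \ref{c2_2}, which comes from the refined chain-division counting using the quantities $S_1,\ldots,S_7$ together with the tail product $(|T_8|+1)(|T_9|+1)\cdots(|T_{86}|+1) < 1.0 \cdot 10^{91}$. No further work is required on that side; I would merely cite the corollary.

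For the lower bound, Corollary \ref{c1_2} gives $2^{94} < |\mathbb{G}_4|$, so the only remaining step is to verify that $2^{94} > 10^{28.2}$. Using the standard estimate $\log_{10} 2 > 0.301$, I would write $\log_{10} 2^{94} = 94 \log_{10} 2 > 94 \cdot 0.301 = 28.294 > 28.2$, and conclude $10^{28.2} < 2^{94} < |\mathbb{G}_4|$. This is the only genuine piece of arithmetic in the whole argument.

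The ``hard part'' of this theorem is really not the theorem itself but the machinery behind the two corollaries it invokes: the stratification of $\mathbb{G}_3$ into the layers $U_i$, Lemma \ref{jouge} and Lemma \ref{match} enabling the chain decomposition into the $T_i$'s, Lemma \ref{at} providing canonical forms from antichains, and the anti-chain enumeration in Section \ref{subsec42}. Given those, the final theorem reduces to one logarithm estimate and a quotation of Corollary \ref{c2_2}, and the entire proof can be written as a single displayed compound inequality.
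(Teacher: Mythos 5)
Your proposal is correct and matches the paper's own argument exactly: the theorem is obtained by combining Corollary \ref{c1_2} (where the paper likewise converts $2^{94}$ to $10^{28.2}$ via $2 > 10^{0.3}$, so $94\cdot 0.3 = 28.2$) with Corollary \ref{c2_2}. Your logarithm check $94\cdot 0.301 = 28.294 > 28.2$ is a slightly sharper version of the same arithmetic, and your observation that all the real work lives in the supporting lemmas is accurate.
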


\section{Verification}
\label{sec5}
Although this study has achieved significant improvements in the upper and lower bounds of the number of games born by day $4$, there is still a large gap.
For the number of games born by day $3$ (already known as the true value), we used the same method of this study to explore which one of them is closer to the true value. In other words, we examined which of the obtained upper and lower bounds was closer to the true value.

As shown in Figure \ref{u_2}, the largest set of the stratification has $4$ elements. Therefore, the lower bounds obtained using the method in Sections \ref{subsec41} and  \ref{subsec42} are $2^4 \cdot 4 = 64$ and $4 \cdot ( 2^4 +  2 \cdot 12 + 2 \cdot 6) = 208$, respectively.

Meanwhile, Figure \ref{t_2} shows a chain division of games born by day $2$. There are four chains and the numbers of elements are $9, 7, 5,$ and $1$. Therefore, the upper bounds obtained using the method in Sections \ref{subsec43} and  \ref{subsec44} are $(10\cdot 8 \cdot 6 \cdot 2) ^ 2 = 921600$ and $(9 \cdot 8 \cdot 4 \cdot 2 + 7 \cdot 6 \cdot 2 + 5 \cdot 2 + 1 + 1) ^ 2= 451584$, respectively.

The true value, $1474$, is close to the lower bound. Therefore, if the same can be said for $\mathbb{G}_4$, the lower bound $10^{28.2}$ is closer to the true value of the number of games born by day $4$ than the upper bound $4.0 \cdot 10^{184}.$

\section{Conclusion}
\label{sec6}
In this study, we significantly improved the upper and lower bounds of the number of games born by day $4$. We obtained this result by using some algebraic properties of combinatorial game theory.
As the improvement of upper and lower bounds has long remained unsolved, this result is an important development for all aspects of the game as an algebraic object and aspects analyzing the game itself.
However, there remains a gap in the width of the upper and lower bounds and we will continue trying to improve them.
Specifically, we improved the upper and lower bounds of $|\mathbb{G}_4|$, by calculating the length of chains, the number of chains, and the lower bounds of the number of anti-chains of $\mathbb{G}_3$. As this method can be generalized, by calculating the length of chains, the number of chains, and the lower bounds of the number of anti-chains of $\mathbb{G}_n$, we also try improving the upper and lower bounds of $|\mathbb{G}_{n+1}|$.
This research is also related to counting and algorithms, and we will continue to contribute to combinatorial game theory and related fields by improving our methods and conducting research on applications of the results obtained.

\appendix
\section{$\mathbb{G}_3$ and its stratification}

Tables \ref{tablea1}, \ref{tablea2}, and \ref{tablea3} shows every element in $\mathbb{G}_3$ and its stratification. As $U_{24}, \ldots, U_{45}$ can be constructed by using upper and lower symmetry, we omit them. The $i$-th element from the left in each row belongs $T_i$ in the chain division we used.
An Excel file integrating these tables is available at: https://sites.google.com/site/kokisuetsugu2/games-born-by-day-3

\begin{table}[b]
\caption{Elements in $U_i( 1 \leq i \leq 10)$}
\label{tablea1}
\small
\begin{tabular}{c|l}
\hline \hline
$U_1$ & \begin{tabular}{l} $3$ \end{tabular} \\ \hline
$U_2$ & \begin{tabular}{l} $2,2*$  \end{tabular} \\ \hline
$U_3$ & \begin{tabular}{l} $\frac{3}{2},\{2|1\},\{2|1*\}$ \end{tabular} \\ \hline
$U_4$ & \begin{tabular}{l} $1\uparrow,1\uparrow*,\{2|1,1*\},\{2||1|*\},\{2||1|0\}$ \end{tabular} \\ \hline
$U_5$ & \begin{tabular}{l} $1*2,1*,\{2|\frac{1}{2}\},1+\text{Tiny}_{1*},1+\text{Tiny}_1,\{2|1,\{1|*\}\},\{2|1,\{1|0\}\},$ \\ $\{2|\{1|0\},\{1|*\}\}$ \end{tabular} \\ \hline

$U_6$ & \begin{tabular}{l} $\{1,1*|\frac{1}{2}\},1\downarrow*,\{2|\frac{1}{2},\{1|*\}\},\{1,1*|1,\{1|*\}\},\{1,1*|1,\{1|0\}\},$ \\
$\{2|\frac{1}{2},\{1|*\}\},\{2|\frac{1}{2},\{1|0\}\},\{2|1,\{1|0\},\{1|*\}\},\{2||1|0,*\}$ \end{tabular} \\ \hline

$U_7$ & \begin{tabular}{l} $\{1,1*|\frac{1}{2},\{1|*\}\},\{1|\frac{1}{2}\},\{2|\frac{1}{2},\{1|0\},\{1|*\}\},\{1|1,\{1|*\}\},$ \\

$\{1|1,\{1|0\}\}, \{2|\uparrow\},\{1,1*|\frac{1}{2},\{1|0\}\},\{1,1*|1,\{1|0\},\{1|*\}\},$ \\

$\{1||1|0,*\},\{2|\uparrow*\},\{2|1,\{1|0,*\}\},\{2|\pm1\}$ \end{tabular} \\ \hline

$U_8$ & \begin{tabular}{l}
$\{1,1*|\uparrow\},\{1|\frac{1}{2},\{1|*\}\},\{2|\uparrow,\{1|0\}\},\{1|1,\{1|0\},\{1|*\}\},$ \\
$\{1|\frac{1}{2},\{1|0\}\},\{2|0\},\{1,1*|\uparrow*\},\{1,1*|\frac{1}{2},\{1|0\},\{1|*\}\},$ \\
$\{1,1*|1,\{1|0,*\}\}, \{2|\uparrow*,\{1|*\}\},\{2|\frac{1}{2},\{1|0,*\}\},1+\text{Tiny}_ 2,$ \\
$\{2|1,\pm1\},\{2|*\}$\end{tabular}
 \\ \hline

$U_9$ & \begin{tabular}{l} $\{1,1*|0\},\{1|\uparrow\},\{2|\uparrow,\{1|0,*\}\},\{1|\frac{1}{2},\{1|0\},\{1|*\}\},\{1|\uparrow*\},$ \\

$\{2|0,\{1|0\}\},\{1,1*|\uparrow*,\{1|*\}\},\{1,1*|\uparrow,\{1|0\}\},\{1|1,\{1|0,*\}\},$ \\

$\{2|\uparrow, \uparrow*\},\{1,1*|\frac{1}{2},\{1|0,*\}\},1,\{1,1*|1,\pm1\},\{1,1*|*\},$ \\

$\{2|\frac{1}{2},\pm1\},\{2|\uparrow*,\{1|0,*\}\},\{2|*,\{1|*\}\}$ \end{tabular} \\ \hline

$U_{10}$ & \begin{tabular}{l} $\{1,1*|0,\{1|0\}\},\{1,\{1|0\}|0\},\{2|0,\{1|0,*\}\},\{1|\frac{1}{2},\{1|0,*\}\},$ \\

$\{1|\uparrow*,\{1|*\}\},\{2|0,\uparrow*\},\{1,1*|\uparrow, \uparrow*\},\{1|\uparrow,\{1|0\}\},\{1|1,\pm1\},$\\

$\{2|\uparrow, \uparrow*,\{1|0,*\}\},\{1,1*|\uparrow,\{1|0,*\}\},1\downarrow,\{1,1*|\frac{1}{2},\pm1\},$ \\

$\{1,\{1|*\}|*\},\{2|\uparrow,\pm1\},\{1,1*|\uparrow*,\{1|0,*\}\},\{1,1*|*,\{1|*\}\},$ \\ 

$\{2|*,\uparrow\},\{2|\uparrow*,\pm1\},\{2|*,\{1|0,*\}\}$ \end{tabular} \\ \hline

\end{tabular}
\end{table}
\begin{table*}[tb]
\caption{Elements in $U_i( 11 \leq i \leq 19)$}
\label{tablea2}
\small
\begin{tabular}{c|l}
\hline \hline

$U_{11}$ & \begin{tabular}{l} $\{1,\{1|0\}|0,\{1|0\}\},\{1|0\},\{2|0,\uparrow*,\{1|0,*\}\},\{1|\frac{1}{2},\pm1\},\{1|\uparrow, \uparrow*\},\{1,1*|0,\uparrow*\},\{1,1*|\uparrow, \uparrow*,\{1|0,*\}\},\{1|\uparrow,\{1|0,*\}\},\frac{3}{4},\{2|\uparrow, \uparrow*,\pm1\},$ \\
$\{1,1*|0,\{1|0,*\}\},\{1*|\frac{1}{2}\},\{1,1*|\uparrow,\pm1\},\{1|*\},\{2|0,\pm1\},\{1|\uparrow*,\{1|0,*\}\},\{1,\{1|*\}|*,\{1|*\}\},\{1,1*|*,\uparrow\},\{1,1*|\uparrow*,\pm1\},$ \\

$\{1,1*|*,\{1|0,*\}\},\{2|0,*\},\{2|*,\uparrow,\{1|0,*\}\},\{2|*2\},\{2|*,\pm1\}$ \end{tabular} \\ \hline

$U_{12}$ & \begin{tabular}{l} $\{1,\{1|0\}|0,\{1|0,*\}\},\{1|0,\{1|0\}\},\{2|0,*2\},\frac{1}{2}*,\{1|\uparrow,\uparrow*,\{1|0,*\}\},\{1,\{1|0\}|0,\uparrow*\},\{1,1*|0,\uparrow*,\{1|0,*\}\},\{1|\uparrow,\pm1\},\frac{1}{2},\{2|*,\uparrow,\pm1\},$ \\
$\{1,1*|0,\pm1\},\{1*|\uparrow*\},\{1*|\uparrow\},\{1|*,\{1|*\}\},\{2|0,\uparrow*,\pm1\},\{1|\uparrow*,\pm1\},\{1,\{1|*\}|*,\uparrow\},\{1,1*|0,*\},\{1,1*|\uparrow,\uparrow*,\pm1\},$ \\
$\{1,\{1|*\}|*,\{1|0,*\}\},\{2|0,*,\{1|0,*\}\},\{1,1*|*,\uparrow,\{1|0,*\}\},\{1,1*|*2\},\{1,1*|*,\pm1\},\{2|\pm1,*2\},\{2|*,*2\}$ \end{tabular} \\ \hline

$U_{13}$ & \begin{tabular}{l} $\{1|0,\{1|0,*\}\},\{1|0,\uparrow*\},\{2|0,*,*2\},\{\frac{1}{2}|\uparrow\},\{1|*2\},\{1,\{1|0\},\{1|*\}|0,*\},\{1,\{1|0\}|0,\uparrow*,\{1|0,*\}\},\{1|\uparrow, \uparrow*,\pm1\},\frac{1}{4},\{2|*,\pm1,*2\},$ \\

$\{1,\{1|0\}|0,\pm1\},\{1*|\uparrow, \uparrow*\},\{1*|0\},\{1|*,\uparrow\},\{2|0,\pm1,*2\},\{\frac{1}{2}|\uparrow*\},\{1,\{1|*\}|*,\uparrow,\{1|0,*\}\},\{1,1*|0,*,\{1|0,*\}\},\{1,1*|0,\uparrow*,\pm1\},$ \\ 

$\{1|*,\{1|0,*\}\},\{2|0,*,\pm1\},\{1,1*|*,\uparrow,\pm1\},\{1,1*|0,*2\},\{1,\{1|*\}|*,\pm1\},\{1,1*|\pm1,*2\},\{1,1*|*,*2\},\{2|\downarrow\},\{1*|*\},$ \\
$\{2||0,*|-1\},\{2|\downarrow*\}$ \end{tabular} \\ \hline

$U_{14}$ & \begin{tabular}{l} $\{1|0,\pm1\},\{1|0,\uparrow*,\{1|0,*\}\},\{2|0,\downarrow*\},\{\frac{1}{2}|\uparrow, \uparrow*\},\{1,\{1|*\}|*,*2\},\{1,\{1|*\}|0,*\},\{1,\{1|0\}|0,*2\},\{1|\pm1,*2\},\Uparrow*,\{2|*,\{0,*|-1\}\},$ \\ 
$\{\frac{1}{2},\{1|0\}|0\},\{1*|*,\uparrow\},\{1*|0,\uparrow*\},\{1,\{1|0\}|0,*\},\{2|0,\{0,*|-1\}\},\Uparrow,\{1,\{1|*\}|*,\uparrow,\pm1\},\{1,\{1|0\},\{1|*\}|0,*,\{1|0,*\}\},$ \\

$\{1,\{1|0\}|0,\uparrow*,\pm1\},\{1|*,\uparrow,\{1|0,*\}\},\{2|0,*,\pm1,*2\},\{1,1*|0,*,\pm1\},\{1,1*|0,\pm1,*2\},\{1|*,\pm1\},\{1,1*|*,\pm1,*2\},\{1,1*|0,*,*2\},$ \\ 

$\{1,1*|\downarrow\},\{\frac{1}{2},\{1|*\}|*\},\{1,1*||0,*|-1\},\{1,1*|\downarrow*\},\{2|\downarrow,\pm1\},\{2|*,\downarrow\},\{1*|*2\},\{2|\downarrow*,\pm1\}$ \end{tabular} \\ \hline

$U_{15}$ & \begin{tabular}{l} $\{\frac{1}{2}|0\},\{1|0,\uparrow*,\pm1\},\{1,1*|0,\downarrow*\},\{\frac{1}{2},\{1|*\}|*,\uparrow\},\{1,\{1|*\}|*,\pm1,*2\},\{1,\{1|0,*\}|0,*\},\{1|0,*2\},\{1||0,*|-1\},\{0|\uparrow, \uparrow*\},$ \\
$\{2|\downarrow*,\{0,*|-1\}\},\{0,\{1|0\}|0\},\{1*|*,*2\},\{1*|0,*2\},\{1,\{1|0\}|0,*,\{1|0,*\}\},\{2|0,*,\{0,*|-1\}\},\{1|*||*\},\{1,\{1|0\},\{1|*\}|0,*,\pm1\},$ \\
$\{1,\{1|*\}|0,*,\{1|0,*\}\},\{\frac{1}{2},\{1|0\}|0,\uparrow*\},\{1|*,*2\},\{1,1*|0,*,\pm1,*2\},\{1*|0,*\},\{1,\{1|0\}|0,\pm1,*2\},\{1|*,\uparrow,\pm1\},\{1,1*|*,\{0,*|-1\}\},$ \\
$\{1,\{1|0\},\{1|*\}|0,*,*2\},\{1,\{1|0\}|\downarrow\},\{\frac{1}{2}|*\},\{1,1*|0,\{0,*|-1\}\},\{1,\{1|*\}|\downarrow*\},\{1,1*|\downarrow,\pm1\},\{1,1*|*,\downarrow\},\{\frac{1}{2}|*2\},$ \\
$\{1,1*|\downarrow*,\pm1\},\{2|0,\downarrow*,\pm1\},\{2|\downarrow,\{0,*|-1\}\},\{2|*,\downarrow,\pm1\},\{2|\downarrow, \downarrow*\},\{1*||0,*|-1\}$ \end{tabular} \\ \hline

$U_{16}$ & \begin{tabular}{l} $\{\frac{1}{2}|0,\uparrow*\},\{1|0,\pm1,*2\},\{1,\{1|0\},\{1|*\}|0,\downarrow*\},\{\frac{1}{2}|*,\uparrow\},\{\frac{1}{2},\{1|*\}|*,*2\},\{1|0,*\},\{1|\downarrow\},\{1,\{1|*\}|*,\{0,*|-1\}\},\{0,\{1|0\}|0,\uparrow*\},$\\

$\{2||0|-1\}, \uparrow^{[2]}+*,\{1*|*,\{0,*|-1\}\},\{1*|0,\{0,*|-1\}\},\{1,\{1|0,*\}|0,*,\{1|0,*\}\},\{1,1*|0,*,\{0,*|-1\}\},\{1|*||*,\uparrow\},$ \\

$\{1,\{1|0\}|0,*,\pm1\}, \{1,\{1|*\}|0,*,\pm1\},\{\frac{1}{2},\{1|0\},\{1|*\}|0,*\},\{1|\downarrow*\},\{1,\{1|0\},\{1|*\}|0,*,\pm1,*2\},\{1*|0,*,*2\},\{\frac{1}{2},\{1|0\}|0,*2\},$ \\

$\{1|*,\pm1,*2\},\{1,1*|\downarrow*,\{0,*|-1\}\},\{1,\{1|*\}|0,*,*2\},\{1,\{1|0\}|\downarrow,\pm1\},\uparrow^{[2]},\{1,\{1|0\}|0,\{0,*|-1\}\},\{1,\{1|*\}|\downarrow*,\pm1\}, $ \\

$\{1,1*|\downarrow,\{0,*|-1\}\},\{1,\{1|0\},\{1|*\}|*,\downarrow\},\uparrow*3,\{1,1*|0,\downarrow*,\pm1\},\{2|0,\downarrow*,\{0,*|-1\}\},\{2|*,\downarrow,\{0,*|-1\}\},\{1,1*|*,\downarrow,\pm1\},$ \\

$\{1,1*|\downarrow, \downarrow*\},\{\frac{1}{2}||0,*|-1\},\{1*|\downarrow\},\{1*|\downarrow*\},\{2|\downarrow, \downarrow*,\pm1\},\{2||*|-1\}, \{2|-\frac{1}{2}\},\{1,\{1|0\}|0,*,*2\}$ \end{tabular} \\ \hline

$U_{17}$ & \begin{tabular}{l} $\{0,\uparrow*|0,\uparrow*\},\{\frac{1}{2}|0,*2\},\{1,\{1|0\},\{1|*\}|0,\downarrow*,\pm1\},\{\uparrow|*,\uparrow\},\{\frac{1}{2},\{1|*\}|\downarrow*\},\{1|0,*,\{1|0,*\}\},\{1|\downarrow,\pm1\},\{1|*,\{0,*|-1\}\},$ \\

$\{\{1|0\},\{1|*\}|0,*\},\{1,1*||0|-1\},\uparrow*,\{1*|0,*,\{0,*|-1\}\},\{\frac{1}{2},\{1|0\}|0,\{0,*|-1\}\},\{1,\{1|0,*\}|0,*,\pm1\},$ \\

$\{1,\{1|0\},\{1|*\}|0,*,\{0,*|-1\}\},\{1|*||*,*2\},\{\frac{1}{2},\{1|0\}|0,*\},\{1,\{1|*\}|0,*,\pm1,*2\},\{\frac{1}{2},\{1|*\}|0,*\},\{1,\{1|0\}|0,\downarrow*\},$ \\

$\{1,\{1|0\},\{1|*\}|*,\downarrow,\pm1\},\{\frac{1}{2},\{1|0\},\{1|*\}|0,*,*2\}, \{0,\{1|0\}|0,*2\},\{\frac{1}{2}|*,*2\},\{1,\{1|*\}|\downarrow*,\{0,*|-1\}\},\{1,\{1|*\}|0,\downarrow*\},\{\frac{1}{2},\{1|0\}|\downarrow\},$ \\

$\uparrow,\{1|0,\{0,*|-1\}\},\{1|\downarrow*,\pm1\},\{1,\{1|0\}|\downarrow,\{0,*|-1\}\},\{1,\{1|*\}|*,\downarrow\},\{0||0,*|-1\},\{1,1*|0,\downarrow*,\{0,*|-1\}\},\{2|0,\{0|-1\}\},$ \\

$\{2|\downarrow, \downarrow*,\{0,*|-1\}\}, \{1,1*|*,\downarrow,\{0,*|-1\}\},\{1,\{1|0\},\{1|*\}|\downarrow, \downarrow*\},\{\frac{1}{2},\{1|*\}|*,\{0,*|-1\}\},\{1*|\downarrow,\{0,*|-1\}\},\{1*|0,\downarrow*\},$ \\

$\{1,1*|\downarrow, \downarrow*,\pm1\},\{1,1*||*|-1\},\{1,1*|-\frac{1}{2}\},\{1,\{1|0,*\}|0,*,*2\},\{1*|*,\downarrow\},\{1*|\downarrow*,\{0,*|-1\}\},\{1,\{1|0\}|*,\downarrow\},$ \\

$\{1,\{1|0\}|0,*,\pm1,*2\},\{2|*,\{*|-1\}\},\{2|\pm1,-\frac{1}{2}\},\{2|-1\}$ \end{tabular} \\ \hline

$U_{18}$ & \begin{tabular}{l} $\{\uparrow*,\{1|*\}|0,*\},\{0,\uparrow*|0,*2\},\{\frac{1}{2},\{1|0\},\{1|*\}|0,\downarrow*\},\{\uparrow,\{1|0\}|0,*\},\{1|*||\downarrow*\},\{1|0,*,*2\},\{\frac{1}{2}|\downarrow\},\{1,\{1|0\}|0,*,\{0,*|-1\}\},$ \\

$\{\{1|0\},\{1|*\}|0,*,*2\},\{1,\{1|*\}||0|-1\},\{0,*|0,\uparrow*\},\{1*|*,\downarrow,\{0,*|-1\}\},\{\frac{1}{2}|0,\{0,*|-1\}\},\{1|0,*,\pm1\},\{\frac{1}{2},\{1|0\},\{1|*\}|0,*,\{0,*|-1\}\},$ \\

$\{\uparrow|*,*2\},\{\frac{1}{2},\{1|0\}|0,*,*2\},\{1,\{1|*\}|0,*,\{0,*|-1\}\},\{\frac{1}{2},\{1|0,*\}|0,*\},\{1,\{1|0\}|0,\downarrow*,\pm1\},\{1,\{1|*\}|*,\downarrow,\pm1\},\{\frac{1}{2},\{1|*\}|0,*,*2\},$ \\

$\{0,\{1|0\}|0,\{0,*|-1\}\},\{\frac{1}{2}|*,\{0,*|-1\}\},\{1|\downarrow*,\{0,*|-1\}\},\{1,\{1|*\}|0,\downarrow*,\pm1\},\{0,\{1|0\}|\downarrow\},\{0|*,\uparrow\},\{1|\downarrow,\{0,*|-1\}\},$ \\

$\{\frac{1}{2}|\downarrow*\},\{\frac{1}{2},\{1|0\}|\downarrow,\{0,*|-1\}\},\{1,\{1|*\}|\downarrow, \downarrow*\},\{1|*||*,\{0,*|-1\}\},\{1,\{1|0\},\{1|*\}|0,\downarrow*,\{0,*|-1\}\},\{1,1*|0,\{0|-1\}\},$ \\

$\{2|\downarrow,\{0|-1\}\},\{1,\{1|0\},\{1|*\}|*,\downarrow,\{0,*|-1\}\},\{1,\{1|0\},\{1|*\}|\downarrow, \downarrow*,\pm1\},\{\frac{1}{2},\{1|*\}|\downarrow*,\{0,*|-1\}\},\{1*||*|-1\},$ \\

$\{1*|\downarrow, \downarrow*\},\{1,1*|\downarrow, \downarrow*,\{0,*|-1\}\},\{1,\{1|0\}||*|-1\},\{1,\{1|0\},\{1|*\}|-\frac{1}{2}\},\{1,\{1|0,*\}|0,\downarrow*\},\{\frac{1}{2},\{1|0\},\{1|*\}|*,\downarrow\},$ \\

$\{1*|0,\downarrow*,\{0,*|-1\}\},\{1,\{1|0,*\}|*,\downarrow\},\{1,\{1|0,*\}|0,*,\pm1,*2\},\{1,1*|*,\{*|-1\}\},\{1,1*|\pm1,-\frac{1}{2}\},\{1,1*|-1\},\{1,\{1|0\}|*,\downarrow,\pm1\},$ \\

$\{1*||0|-1\},\{1,\{1|0\}|\downarrow, \downarrow*\},\{2|\downarrow*,\{*|-1\}\},\{2|-\frac{1}{2},\{0,*|-1\}\},\{2|-1,\pm1\}$ \end{tabular} \\ \hline

$U_{19}$ & \begin{tabular}{l} $\{\uparrow*,\{1|*\}|0,*,*2\},\{0,*|0,*2\},\{\frac{1}{2},\{1|*\}|0,\downarrow*\},\{\uparrow, \uparrow*,\{1|0,*\}|0,*\},\{\{1|0\},\{1|*\}|0,\downarrow*\},\{1|*,\downarrow\},\{0,\uparrow*|\downarrow\},\{1,\{1|0\}|0,\downarrow*,\{0,*|-1\}\},$ \\

$\{\uparrow,\{1|0\}|0,*,*2\},\{1||0|-1\},\{*,\{1|*\}|0,*\},\{\frac{1}{2},\{1|0\},\{1|*\}|*,\downarrow,\{0,*|-1\}\},\{0,\uparrow*|0,\{0,*|-1\}\},\{1|0,*,\pm1,*2\},$ \\

$\{\frac{1}{2},\{1|0\}|0,*,\{0,*|-1\}\},\{0|*,*2\},\{\frac{1}{2},\{1|0,*\}|0,*,*2\},\{1,\{1|0,*\}|0,*,\{0,*|-1\}\},\{\frac{1}{2}|0,*\},\{\frac{1}{2},\{1|0\}|0,\downarrow*\},\{\frac{1}{2},\{1|*\}|*,\downarrow\},$ \\

$\{\frac{1}{2},\{1|*\}|0,*,\{0,*|-1\}\},\{0,\{1|0\}|\downarrow,\{0,*|-1\}\},\{\uparrow|*,\{0,*|-1\}\},\{\frac{1}{2}|\downarrow*,\{0,*|-1\}\},\{1,\{1|*\}|0,\downarrow*,\{0,*|-1\}\},$ \\

$\{\{1|0\},\{1|*\}|*,\downarrow\},\{0,\{1|0\}|0,*\},\{1||*|-1\},\{\uparrow|\downarrow*\},\{\frac{1}{2}|\downarrow,\{0,*|-1\}\},\{1,\{1|*\}|\downarrow, \downarrow*,\pm1\},\{\{1|0\},\{1|*\}|0,*,\{0,*|-1\}\},$ \\

$\{1,\{1|0\},\{1|*\}|0,\{0|-1\}\},\{1,1*|\downarrow,\{0|-1\}\},\{2|\{0|-1\},\{*|-1\}\},\{1,\{1|*\}|*,\downarrow,\{0,*|-1\}\},\{1,\{1|0\}|\downarrow, \downarrow*,\pm1\},$ \\

$\{1|*||\downarrow*,\{0,*|-1\}\},\{1*|*,\{*|-1\}\},\{1*|\downarrow, \downarrow*,\{0,*|-1\}\},\{1,\{1|0\},\{1|*\}|\downarrow, \downarrow*,\{0,*|-1\}\},\{\frac{1}{2},\{1|0\}||*|-1\},\{1,\{1|*\}|-\frac{1}{2}\},$ \\

$\{1|0,\downarrow*\},\{\frac{1}{2},\{1|0\},\{1|*\}|\downarrow, \downarrow*\},\{\frac{1}{2},\{1|0\},\{1|*\}|0,\downarrow*,\{0,*|-1\}\},\{1,\{1|0,*\}|*,\downarrow,\pm1\},\{1,\{1|0,*\}|0,\downarrow*,\pm1\},$ \\

$\{1,\{1|0\},\{1|*\}|*,\{*|-1\}\},\{1,\{1|0\},\{1|*\}|\pm1,-\frac{1}{2}\},\{1,\{1|0\},\{1|*\}|-1\},\{\frac{1}{2},\{1|0\}|*,\downarrow\},\{\frac{1}{2},\{1|*\}||0|-1\},\{1,\{1|0,*\}|\downarrow, \downarrow*\},$ \\ 

$\{1,1*|\downarrow*,\{*|-1\}\},\{1,1*|-\frac{1}{2},\{0,*|-1\}\},\{1,1*|-1,\pm1\},\{1*|0,\{0|-1\}\},\{1,\{1|0\}|-\frac{1}{2}\},\{1*|-\frac{1}{2}\},\{2|-\frac{1}{2},\{*|-1\}\},$ \\

$\{1,\{1|0\}|*,\downarrow,\{0,*|-1\}\},\{2|-1,\{0,*|-1\}\},\{2|\{0|-1\},-\frac{1}{2}\}$ \end{tabular} \\ \hline

\end{tabular}
\end{table*}

\begin{table*}[tb]
\caption{Elements in $U_i( 20 \leq i \leq 23)$}
\label{tablea3}
\small
\begin{tabular}{c|l}
\hline \hline

$U_{20}$ & \begin{tabular}{l} $\{\uparrow*,\{1|*\}|0,*,\{0,*|-1\}\},\{0,*|\downarrow\},\{\frac{1}{2},\{1|*\}|0,\downarrow*,\{0,*|-1\}\},\{\uparrow,\uparrow*|0,*\},\{\uparrow*,\{1|*\}|0,\downarrow*\},\{1|\downarrow,\downarrow*\},\{\uparrow*,\{1|*\}|*,\downarrow\},$ \\

$\{1,\{1|0\}|\downarrow,\downarrow*,\{0,*|-1\}\},\{0,\{1|0\}|0,*,*2\},\{1,\{1|0\}|0,\{0|-1\}\},\{*,\uparrow,\{1|0,*\}|0,*\},\{\frac{1}{2},\{1|0\},\{1|*\}|*,\{*|-1\}\},\{0,*|0,\{0,*|-1\}\},$ \\

$\{1|0,*,\{0,*|-1\}\},\{\uparrow,\{1|0\}|0,*,\{0,*|-1\}\}, \uparrow^{2},\{\uparrow,\uparrow*,\{1|0,*\}|0,*,*2\},\{\frac{1}{2},\{1|0,*\}|0,*,\{0,*|-1\}\},\{\frac{1}{2}|0,*,*2\},\{\frac{1}{2},\{1|0,*\}|0,\downarrow*\},$ \\

$\{\frac{1}{2},\{1|0,*\}|*,\downarrow\},\{\frac{1}{2},\{1|*\}|*,\downarrow,\{0,*|-1\}\},\{\{1|0\},\{1|*\}|*,\downarrow,\{0,*|-1\}\},\{0|*,\{0,*|-1\}\},\{\uparrow|\downarrow*,\{0,*|-1\}\},\{1,\{1|*\}|0,\{0|-1\}\},$ \\

$\{\uparrow,\{1|0\}|*,\downarrow\},\{0,\uparrow*,\{1|0,*\}|0,*\},\{\frac{1}{2}||*|-1\},\{\uparrow,\{1|0\}|0,\downarrow*\},\{0,\uparrow*|\downarrow,\{0,*|-1\}\},\{1,\{1|0,*\}|\downarrow,\downarrow*,\pm1\},$ \\

$\{\{1|0\},\{1|*\}|0,\downarrow*,\{0,*|-1\}\},\{\frac{1}{2},\{1|0\},\{1|*\}|0,\{0|-1\}\},\{1,1*|\{0|-1\},\{*|-1\}\},\{2|\{0|-1\},-\frac{1}{2},\{*|-1\}\},$ \\

$\{1,\{1|*\}|\downarrow, \downarrow*,\{0,*|-1\}\},\{\frac{1}{2},\{1|0\}|\downarrow,\downarrow*\},\{1|*||0|-1\},\{1*|\downarrow*,\{*|-1\}\},\{\frac{1}{2},\{1|0\},\{1|*\}|\downarrow,\downarrow*,\{0,*|-1\}\},$ \\

$\{1,\{1|0\},\{1|*\}|\downarrow,\{0|-1\}\},\{0,\{1|0\}||*|-1\},\{1,\{1|*\}|\pm1,-\frac{1}{2}\},\{1|0,\downarrow*,\pm1\},\{\frac{1}{2},\{1|*\}|\downarrow,\downarrow*\},\{\frac{1}{2},\{1|0\}|0,\downarrow*,\{0,*|-1\}\},$ \\

$\{1|*,\downarrow,\pm1\},\{1,\{1|0,*\}|0,\downarrow*,\{0,*|-1\}\},\{1,\{1|*\}|*,\{*|-1\}\},\{\frac{1}{2},\{1|0\},\{1|*\}|-\frac{1}{2}\},\{1,\{1|*\}|-1\},\{\frac{1}{2},\{1|0\}|*,\downarrow,\{0,*|-1\}\},$ \\

$\{\frac{1}{2}||0|-1\},\{1,\{1|0,*\}|-\frac{1}{2}\},\{1,\{1|0\},\{1|*\}|\downarrow*,\{*|-1\}\},\{1,\{1|0\},\{1|*\}|-\frac{1}{2},\{0,*|-1\}\},\{1,\{1|0\},\{1|*\}|-1,\pm1\},\{1*|\downarrow,\{0|-1\}\},$ \\

$\{1,\{1|0\}|-1\},\{1*|-\frac{1}{2},\{0,*|-1\}\},\{1,1*|-\frac{1}{2},\{*|-1\}\},\{1,\{1|0,*\}|*,\downarrow,\{0,*|-1\}\},\{1,1*|-1,\{0,*|-1\}\},\{1,1*|\{0|-1\},-\frac{1}{2}\},$ \\

$\{*,\{1|*\}|0,*,*2\},\{\{1|0\},\{1|*\}|\downarrow, \downarrow*\},\{1,\{1|0\}|\pm1,-\frac{1}{2}\},\{1*|-1\},\{1,\{1|0\}|*,\{*|-1\}\},\{2|-1,\{*|-1\}\},\{2|-1,\{0|-1\}\}$
 \end{tabular} \\ \hline 

$U_{21}$ & \begin{tabular}{l} $\{\uparrow*,\{1|*\}|*,\downarrow,\{0,*|-1\}\},\{*,\{1|*\}|*,\downarrow\},\{\uparrow*,\{1|*\}|0,\downarrow*,\{0,*|-1\}\},\{\uparrow,\uparrow*|0,*,*2\},\{*,\{1|*\}|0,\downarrow*\},\{1|-\frac{1}{2}\},\{\uparrow, \uparrow*,\{1|0,*\}|*,\downarrow\},$ \\

$\{\frac{1}{2},\{1|0\}|\downarrow,\downarrow*,\{0,*|-1\}\},\{0,\{1|0\}|0,*,\{0,*|-1\}\},\{1,\{1|0\}|\downarrow,\{0|-1\}\},\{0,*,\{1|0,*\}|0,*\},\{\frac{1}{2},\{1|0\}|*,\{*|-1\}\},$ \\

$\{*,\{1|*\}|0,*,\{0,*|-1\}\},\{1|0,\downarrow*,\{0,*|-1\}\},\{\uparrow,\{1|0\}|0,\downarrow*,\{0,*|-1\}\},\{0,\{1|0\}|0,\downarrow*\},\{0,\uparrow*,\{1|0,*\}|0,*,*2\},$ \\

$\{\uparrow, \uparrow*,\{1|0,*\}|0,*,\{0,*|-1\}\},\{\frac{1}{2}|0,*,\{0,*|-1\}\},\{\frac{1}{2}|0,\downarrow*\},\{\frac{1}{2}|*,\downarrow\},\{\frac{1}{2},\{1|*\}|*,\{*|-1\}\},\{\uparrow,\{1|0\}|*,\downarrow,\{0,*|-1\}\},$ \\

$\{0|\downarrow*,\{0,*|-1\}\},\{\uparrow||0|-1\},\{\frac{1}{2},\{1|*\}|0,\{0|-1\}\},\{0,\{1|0\}|*,\downarrow\},\{0,\uparrow*|0,*\},\{0,\uparrow*||*|-1\},\{\uparrow,\uparrow*,\{1|0,*\}|0,\downarrow*\},$ \\

$\{0,*|\downarrow,\{0,*|-1\}\},\{\frac{1}{2},\{1|0,*\}|\downarrow,\downarrow*\},\{\{1|0\},\{1|*\}|\downarrow,\downarrow*,\{0,*|-1\}\},\{\frac{1}{2},\{1|0\},\{1|*\}|\downarrow,\{0|-1\}\},\{1,1*|\{0|-1\},-\frac{1}{2},\{*|-1\}\},$ \\

$\{2|-1,\{0|-1\},\{*|-1\}\},\{1,\{1|*\}|\downarrow,\{0|-1\}\},\{\uparrow,\{1|0\}|\downarrow,\downarrow*\},\{\{1|0\},\{1|*\}|0,\{0|-1\}\},\{1*|\{0|-1\},\{*|-1\}\},$ \\

$\{\frac{1}{2},\{1|*\}|\downarrow,\downarrow*,\{0,*|-1\}\},\{1,\{1|0\},\{1|*\}|\{0|-1\},\{*|-1\}\},\{\{1|0\},\{1|*\}|*,\{*|-1\}\},\{\frac{1}{2},\{1|*\}|-\frac{1}{2}\},\{1|\downarrow, \downarrow*,\pm1\},\{\uparrow*,\{1|*\}|\downarrow,\downarrow*\},$ \\

$\{\frac{1}{2},\{1|0,*\}|0,\downarrow*,\{0,*|-1\}\},\{1|*,\downarrow,\{0,*|-1\}\},\{1,\{1|0,*\}|0,\{0|-1\}\},\{1,\{1|*\}|\downarrow*,\{*|-1\}\},\{\frac{1}{2},\{1|0\},\{1|*\}|-1\},$ \\

$\{1,\{1|0,*\}|-1\},\{\frac{1}{2},\{1|0,*\}|*,\downarrow,\{0,*|-1\}\},\{\frac{1}{2},\{1|0\}|0,\{0|-1\}\},\{1,\{1|0,*\}|\pm1,-\frac{1}{2}\},\{\frac{1}{2},\{1|0\},\{1|*\}|\downarrow*,\{*|-1\}\},$ \\ 

$\{1,\{1|*\}|-\frac{1}{2},\{0,*|-1\}\},\{1,\{1|*\}|-1,\pm1\},\{1*|\{0|-1\},-\frac{1}{2}\},\{1,\{1|0\}|-1,\pm1\},\{\frac{1}{2},\{1|0\},\{1|*\}|-\frac{1}{2},\{0,*|-1\}\},$ \\

$\{1,\{1|0\},\{1|*\}|-\frac{1}{2},\{*|-1\}\},\{1,\{1|0,*\}|\downarrow,\downarrow*,\{0,*|-1\}\},\{1,\{1|0\},\{1|*\}|-1,\{0,*|-1\}\},\{1,\{1|0\},\{1|*\}|\{0|-1\},-\frac{1}{2}\},$ \\

$\{*,\uparrow,\{1|0,*\}|0,*,*2\},\{\{1|0\},\{1|*\}|-\frac{1}{2}\},\{\frac{1}{2},\{1|0\}|-\frac{1}{2}\},\{1*|-1,\{0,*|-1\}\},\{1,\{1|0\}|\downarrow*,\{*|-1\}\},\{1,1*|-1,\{*|-1\}\},$ \\

$\{1,1*|-1,\{0|-1\}\},\{1*|-\frac{1}{2},\{*|-1\}\},\{1,\{1|0\}|-\frac{1}{2},\{0,*|-1\}\},\{1,\{1|0,*\}|*,\{*|-1\}\},\{*,\uparrow|0,*\},\{2|-1*\}$  \end{tabular} \\ \hline

$U_{22}$ &

\begin{tabular}{l} $\{\uparrow*,\{1|*\}|\downarrow,\downarrow*,\{0,*|-1\}\},\{*,\{1|*\}|*,\downarrow,\{0,*|-1\}\},\{\uparrow*,\{1|*\}|0,\{0|-1\}\},\{\uparrow, \uparrow*|0,*,\{0,*|-1\}\},\{*,\{1|*\}|\downarrow,\downarrow*\},\{1|\pm1,-\frac{1}{2}\},$ \\

$\{*,\uparrow,\{1|0,*\}|*,\downarrow\},\{\frac{1}{2},\{1|0,*\}|\downarrow,\downarrow*,\{0,*|-1\}\},\{0,\uparrow*,\{1|0,*\}|0,*,\{0,*|-1\}\},\{1,\{1|0\}|\{0|-1\},\{*|-1\}\},\{0,*,\{1|0,*\}|0,*,*2\},$ \\

$\{\frac{1}{2},\{1|0\}|\downarrow*,\{*|-1\}\},\{*,\{1|*\}|0,\downarrow*,\{0,*|-1\}\},\{1|0,\{0|-1\}\},\{0,\{1|0\}|0,\downarrow*,\{0,*|-1\}\},\{0,\{1|0\}|\downarrow,\downarrow*\},\{0,\uparrow*,\{1|0,*\}|0,\downarrow*\},$ \\

$\{*,\uparrow,\{1|0,*\}|0,*,\{0,*|-1\}\},\{\frac{1}{2}|0,\downarrow*,\{0,*|-1\}\},\{\frac{1}{2}|\downarrow,\downarrow*\},\{\uparrow,\uparrow*|*,\downarrow\},\{\frac{1}{2},\{1|0,*\}|*,\{*|-1\}\},\{0,\{1|0\}|*,\downarrow,\{0,*|-1\}\},\text{Tiny}_1,$ \\

$\{\uparrow,\{1|0\}|0,\{0|-1\}\},\{\frac{1}{2},\{1|*\}|\downarrow,\{0|-1\}\},\{0,\uparrow*,\{1|0,*\}|*,\downarrow\},\{0,\uparrow*|0,*,*2\},\{\uparrow*,\{1|*\}|*,\{*|-1\}\},\{\uparrow,\uparrow*|0,\downarrow*\},\{0,*||*|-1\},$ \\

$\{\frac{1}{2},\{1|0,*\}|-\frac{1}{2}\},\{\{1|0\},\{1|*\}|\downarrow*,\{*|-1\}\},\{\frac{1}{2},\{1|0\},\{1|*\}|\{0|-1\},-\frac{1}{2}\},\{1,1*|-1,\{0|-1\},\{*|-1\}\},\{2|-1,-1*\},$ \\

$\{1,\{1|*\}|\{0|-1\},-\frac{1}{2}\},\{\uparrow,\{1|0\}|\downarrow,\downarrow*,\{0,*|-1\}\},\{\{1|0\},\{1|*\}|\downarrow,\{0|-1\}\},\{1*|\{0|-1\},-\frac{1}{2},\{*|-1\}\},\{\frac{1}{2},\{1|*\}|\downarrow*,\{*|-1\}\},$ \\

$\{1,\{1|*\}|\{0|-1\},\{*|-1\}\},\{\uparrow,\{1|0\}|*,\{*|-1\}\},\{\frac{1}{2},\{1|*\}|-\frac{1}{2},\{0,*|-1\}\},\{1|\downarrow,\downarrow*,\{0,*|-1\}\},\{\uparrow,\uparrow*,\{1|0,*\}|\downarrow,\downarrow*\},$ \\

$\{\uparrow,\uparrow*,\{1|0,*\}|0,\downarrow*,\{0,*|-1\}\},\{\frac{1}{2}|*,\downarrow,\{0,*|-1\}\},\{\frac{1}{2},\{1|0,*\}|0,\{0|-1\}\},\{1,\{1|*\}|-\frac{1}{2},\{*|-1\}\},\{\frac{1}{2},\{1|*\}|-1\},\{1,\pm1|-1\},$ \\

$\{\uparrow,\uparrow*,\{1|0,*\}|*,\downarrow,\{0,*|-1\}\},\{\frac{1}{2},\{1|0\}|\downarrow,\{0|-1\}\},\{1,\{1|0,*\}|-\frac{1}{2},\{0,*|-1\}\},\{\frac{1}{2},\{1|0\},\{1|*\}|\{0|-1\},\{*|-1\}\},$ \\ 

$\{1,\{1|*\}|-1,\{0,*|-1\}\},\{1,\{1|0,*\}|-1,\pm1\},\{1*|-1,\{0|-1\}\},\{\frac{1}{2},\{1|0\}|-1\},\{\{1|0\},\{1|*\}|-\frac{1}{2},\{0,*|-1\}\},$ \\

$\{\frac{1}{2},\{1|0\},\{1|*\}|-\frac{1}{2},\{*|-1\}\},\{1,\{1|0,*\}|\downarrow,\{0|-1\}\},\{1,\{1|0\}|-1,\{0,*|-1\}\},\{1,\{1|0\},\{1|*\}|\{0|-1\},-\frac{1}{2},\{*|-1\}\},$ \\

$\{*,\uparrow,\{1|0,*\}|0,\downarrow*\}, \{\uparrow*,\{1|*\}|-\frac{1}{2}\},\{\uparrow,\{1|0\}|-\frac{1}{2}\},\{\frac{1}{2},\{1|0\},\{1|*\}|-1,\{0,*|-1\}\},\{1,\{1|0,*\}|\downarrow*,\{*|-1\}\},$ \\

$\{1,\{1|0\},\{1|*\}|-1,\{*|-1\}\},\{1,\{1|0\},\{1|*\}|-1,\{0|-1\}\},\{1*|-1,\{*|-1\}\},\{\frac{1}{2},\{1|0\}|-\frac{1}{2},\{0,*|-1\}\},\{1|*,\{*|-1\}\},$ \\

$\{0,*,*2|0,*\},\{1,1*|-1*\},\{\{1|0\},\{1|*\}|-1\},\{1,\{1|0\}|\{0|-1\},-\frac{1}{2}\},\{*,\uparrow|0,*,*2\},\{1,\{1|0\}|-\frac{1}{2},\{*|-1\}\}$

\end{tabular}
 \\ \hline

$U_{23}$ &
\begin{tabular}{l}
 $\{\uparrow*,\{1|*\}|-\frac{1}{2},\{0,*|-1\}\},\{*,\{1|*\}|\downarrow,\downarrow*,\{0,*|-1\}\},\{\uparrow*,\{1|*\}|\downarrow,\{0|-1\}\},\{0,\uparrow*|0,*,\{0,*|-1\}\},\{*,\uparrow,\{1|0,*\}|\downarrow,\downarrow*\},\pm\frac{1}{2},$ \\
 
  $\{0,*,\{1|0,*\}|*,\downarrow\},\{\frac{1}{2}|\downarrow,\downarrow*,\{0,*|-1\}\},\pm(0,\uparrow*,\{1|0,*\}),\{\frac{1}{2},\{1|0\}|\{0|-1\},\{*|-1\}\},\pm(0,*,\{1|0,*\}),\{\uparrow,\{1|0\}|\downarrow*,\{*|-1\}\},$ \\
 $\{*,\{1|*\}|0,\{0|-1\}\},\{1|\downarrow,\{0|-1\}\},\pm(0,\{1|0\}),\{0,\{1|0\}|-\frac{1}{2}\},\pm(0,\uparrow*),\pm(*,\uparrow,\{1|0,*\}),\{\uparrow,\uparrow*|0,\downarrow*,\{0,*|-1\}\},\pm(\uparrow,\uparrow*),\pm(*,\uparrow),$
 \\
$\{\frac{1}{2}|*,\{*|-1\}\},\{0,\{1|0\}|\downarrow,\downarrow*,\{0,*|-1\}\},0,\{\uparrow,\uparrow*,\{1|0,*\}|0,\{0|-1\}\},\{\frac{1}{2},\{1|*\}|\{0|-1\},-\frac{1}{2}\},\{0,\uparrow*,\{1|0,*\}|*,\downarrow,\{0,*|-1\}\},$ \\

$\{0,\uparrow*|*,\downarrow\},\{\uparrow,\uparrow*,\{1|0,*\}|*,\{*|-1\}\},\{*,\uparrow|0,\downarrow*\},\pm(*,\{1|*\}),\{\uparrow,\uparrow*,\{1|0,*\}|-\frac{1}{2}\},\pm(\uparrow*,\{1|*\}),\pm(\frac{1}{2},\{1|0\}),\pm(1,\{1|0\},\{1|*\}),$ \\

$\pm(1,1*),\{1,\{1|*\}|-1,\{0|-1\}\},\pm(\uparrow,\{1|0\}),\pm(\{1|0\},\{1|*\}),\pm(\frac{1}{2},\{1|0\},\{1|*\}),\{\frac{1}{2},\{1|0,*\}|\downarrow*,\{*|-1\}\},$ \\

$\{1,\{1|*\}|\{0|-1\},-\frac{1}{2},\{*|-1\}\},\{0,\{1|0\}|*,\{*|-1\}\},\{\frac{1}{2},\{1|*\}|-1,\{0,*|-1\}\},\{1|-\frac{1}{2},\{0,*|-1\}\},\{0,\uparrow*,\{1|0,*\}|\downarrow,\downarrow*\},$ \\ 

$\{*,\uparrow,\{1|0,*\}|0,\downarrow*,\{0,*|-1\}\},\{\uparrow, \uparrow*|*,\downarrow,\{0,*|-1\}\},\{\frac{1}{2}|0,\{0|-1\}\},\pm(\frac{1}{2},\{1|*\}),\{\uparrow*,\{1|*\}|-1\},\pm(1,\pm1),\pm(\uparrow,\uparrow*,\{1|0,*\}),$ \\ 

$\{\frac{1}{2},\{1|0,*\}|\downarrow,\{0|-1\}\},\pm(\frac{1}{2},\{1|0,*\}),\{\frac{1}{2},\{1|*\}|\{0|-1\},\{*|-1\}\},\pm(1,\{1|*\}),\{\frac{1}{2},\{1|0,*\}|-1\},\{\frac{1}{2},\{1|0\},\{1|*\}|-1,\{0|-1\}\},$ \\ 

$\{\uparrow,\{1|0\}|-1\},\{\{1|0\},\{1|*\}|\{0|-1\},-\frac{1}{2}\},\{\{1|0\},\{1|*\}|-\frac{1}{2},\{*|-1\}\},\{1,\{1|0,*\}|\{0|-1\},\{*|-1\}\},\pm(1,\{1|0,*\}),$ \\

$\{1,\{1|0\}|\{0|-1\},-\frac{1}{2},\{*|-1\}\},\{0,*,\{1|0,*\}|0,\downarrow*\},\{*,\{1|*\}|-\frac{1}{2}\},\{\uparrow,\{1|0\}|-\frac{1}{2},\{0,*|-1\}\},\{\frac{1}{2},\{1|0\},\{1|*\}|-1,\{*|-1\}\},$ \\ 

$\{1,\{1|0,*\}|-\frac{1}{2},\{*|-1\}\},\{1,\{1|0\}|-1,\{*|-1\}\}, \pm(1,\{1|0\}),\{1*|-1,\{0|-1\},\{*|-1\}\},\{\frac{1}{2},\{1|0\}|-1,\{0,*|-1\}\},$ \\

$\{1|\downarrow*,\{*|-1\}\},*2,\{1,\{1|0\},\{1|*\}|-1*\},\{\{1|0\},\{1|*\}|-1,\{0,*|-1\}\}, \{1,\{1|0,*\}|\{0|-1\},-\frac{1}{2}\},\{*,\uparrow|0,*,\{0,*|-1\}\},$ \\

$\{\frac{1}{2},\{1|0\}|-\frac{1}{2},\{*|-1\}\},\pm1,*,*3,\pm(1*),\pm2$ 
 \end{tabular}
 \\ \hline
 
\end{tabular}
\end{table*}


\begin{thebibliography}{9}

\bibitem{FW04}Fraser, W. and Wolfe, D.: {\it Counting the number of games}, Theoret. Comput. Sci. {\bf 313}, pp.527-532 (2004). 
\bibitem{Now19}Nowakowski, R.: {\it Unsolved problems in combinatorial games}, in Games of No Chance 5, MSRI Publications, Vol.70, pp.125-168 (2019).
\bibitem{CGT}Siegel, A. N.: {\it Combinatorial Game Theory}, Graduate studies in mathematics, Vol. 146, American Mathematical Society, Providence, Rhode Island (2013).

\end{thebibliography}
\end{document}